\newtheorem{thm}{Theorem}[section]
\newtheorem{cor}[thm]{Corollary}
\newtheorem{prop}[thm]{Proposition}
\newtheorem{lem}[thm]{Lemma}
\theoremstyle{definition}
\newtheorem{defn}[thm]{Definition}
\newtheorem{exmp}[thm]{Example}
\theoremstyle{remark}
\newtheorem{rem}[thm]{Remark}
\newcommand{\R}{\Bbb{R}}
\newcommand{\Q}{\Bbb{Q}}
\newcommand{\OO}{\mathcal{O}}
\newcommand{\CC}{\mathcal{C}}
\newcommand{\KK}{\mathcal{K}}
\newcommand{\Z}{\Bbb{Z}}
\newcommand{\N}{\Bbb{N}}
\newcommand{\mm}{\bf{m}}
\newcommand{\bb}{\bf{b}}
\newcommand{\Dag}{\text{Dag}}
\newcommand{\Sp}{\text{Sp}}
\newcommand{\Hom}{\text{Hom}}
\newcommand{\la}{\langle}
\newcommand{\ra}{\rangle}
\newcommand{\GL}{\text{GL}}
\newcommand{\iso}{\overset{\sim}{\longrightarrow}}
\newcommand{\y}{\hspace{6pt}}
\let\c@equation\c@thm
\numberwithin{equation}{section}
\title{Dagger groups and $p$-adic distribution algebras}
\author{Aranya Lahiri, Claus Sorensen, Matthias Strauch}
\date{}
\begin{document}

\begin{abstract}
Let $(G,\omega)$ be a $p$-saturated group and $K/\Q_p$ a finite extension. In this paper we introduce the space of $K$-valued \textit{overconvergent} functions $\CC^\dagger(G,K)$. In the process we promote the rigid analytic group attached to $(G,\omega)$ in \cite{LS23} to a dagger group. A main result of this article is that under certain assumptions (satisfied for example when $G$ is a uniform pro-$p$ group) the distribution algebra $D^\dagger(G,K)$, i.e. the strong dual of $\CC^\dagger(G,K)$, is a Fr\'{e}chet-Stein algebra in the sense of \cite{ST03}.

In the last section we introduce overconvergent representations and show that there is an anti-equivalence of categories between overconvergent $G$-representations of compact type and continuous $D^\dagger(G, K)$-modules on nuclear Fr\'{e}chet spaces. This is analogous to the anti-equivalence between locally analytic representations and modules over the locally analytic distribution algebra as proved in \cite{ST02}.
\end{abstract}

\maketitle



\onehalfspacing

\section{Introduction}

The theory of locally analytic representations plays a central role in the $p$-adic Langlands program. For now, let $G$ be any locally analytic group (over $\Q_p$ for simplicity). 
In \cite[Cor.~3.3]{ST02} Schneider and Teitelbaum establish a duality between locally analytic $G$-representations on $K$-vector spaces of compact type and certain 
modules over the locally analytic distribution algebra $D(G,K)$. (Here $K/\Q_p$ is a fixed finite extension which serves as our coefficient field throughout this paper.)
In the sequel \cite{ST03} they introduce the notion of a Fr\'{e}chet-Stein algebra, and one of their main results is that $D(G,K)$ has this property when $G$ is compact. 
This gives rise to an abelian category of coadmissible $D(G,K)$-modules, and by duality an abelian category of {\it{admissible}} locally analytic representations. Moreover, in \cite[Sect.~8]{ST03} they develop a dimension theory for modules over a Fr\'{e}chet-Stein algebra. The Fr\'{e}chet-Stein structure on $D(G,K)$ has many other applications. We mention 
the irreducibility criterion for locally analytic principal series representations in \cite{OS10}, the proof of which makes critical use of the Fr\'{e}chet-Stein structure.

In this paper we will narrow our focus and only consider $p$-valuable groups. Instead of $D(G,K)$ we will introduce and study a distribution algebra $D^\dagger(G,K)$ which is dual to the space of overconvergent functions on $G$. One of our goals (Theorem \ref{maintwo} below) is to show $D^\dagger(G,K)$ is Fr\'{e}chet-Stein when $G$ is uniform.

To state our results precisely, let $(G,\omega)$ be a $p$-valued group. Thus $G$ is a pro-$p$-group endowed with a function $\omega: G\backslash \{1\} \longrightarrow (0,\infty)$ satisfying the axioms (a)--(d) in \cite[p.~169]{Sch11}. Moreover, the topology of $G$ is defined by $\omega$. Note that $G$ is necessarily torsion-free since $\omega(g^p)=\omega(g)+1$ for all $g \in G$.

Without loss of generality we may and will assume $\omega$ is $\Q$-valued, see \cite[Cor.~33.3]{Sch11}. Furthermore, we will exclusively consider {\it{saturated}} groups $(G,\omega)$. This means $\omega(g)>\frac{p}{p-1} \Rightarrow g \in G^p$. Also, $(G,\omega)$ is assumed to be of finite rank $d$, and we may therefore choose an ordered basis $(g_1,\ldots,g_d)$ by \cite[Prop.~26.6]{Sch11}. This puts a global chart on $G$ via the homeomorphism
\begin{align*}
\psi: \Z_p^d &\longrightarrow G \\
(x_1,\ldots,x_d) & \longmapsto g_1^{x_1}\cdots g_d^{x_d}.
\end{align*}
Recall that $\omega(g_1^{x_1}\cdots g_d^{x_d})=\min_{i=1,\ldots,d} \{ \omega(g_i)+v(x_i) \}$ by virtue of $(g_1,\ldots,g_d)$ being an ordered basis. (Here $v$ is the standard valuation on $\Z_p$ with $v(p)=1$.) Via $\psi$ we view functions on $G$ as functions on $\Z_p^d$, and vice versa. 

The construction in \cite{LS23} associates a rigid analytic group $\Bbb{G}^\text{rig}$ over $\Q_p$ with any saturated group $(G,\omega)$ as above. Its underlying space is 
the polydisc $\Sp(\Q_p\langle Z_1,\ldots,Z_d\rangle)$, and there is a natural isomorphism of abstract groups $\Bbb{G}^\text{rig}(\Q_p)\overset{\sim}{\longrightarrow} G$. In this paper 
we first refine this construction and promote $\Bbb{G}^\text{rig}$ to a {\it{dagger}} group $\Bbb{G}$ (by which we mean a group object in the category of dagger spaces, cf. \cite{GK00}).  

In the process we introduce the space $\CC^\dagger(G,K)$ of overconvergent functions $f:G \longrightarrow K$ taking values in a finite extension $K/\Q_p$. Via $\psi$ such $f$ correspond to functions on $\Z_p^d$ which extend to rigid functions on slightly larger polydiscs. See Definition \ref{ocfun} below for a more precise formulation. The space 
$\CC^\dagger(G,K)$ is endowed with a natural locally convex topology known as the fringe (or direct limit) topology.

The central object in this article is the continuous dual $D^\dagger(G,K)$ equipped with the strong topology. This becomes an algebra under convolution, essentially due to the existence of the dagger group $\Bbb{G}$. In this paper we establish some foundational structural properties of these $\dagger$-distribution algebras.   

Our first general result is:

\begin{thm}\label{mainone}
For any saturated $p$-valued group $(G,\omega)$ as above, $D^\dagger(G,K)$ is a nuclear Fr\'{e}chet algebra. (That is, it can be expressed as an inverse limit of Banach $K$-algebras with compact transition maps.)
\end{thm}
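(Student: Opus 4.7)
The plan is to realize $\CC^\dagger(G,K)$ as a compact inductive limit of Banach spaces and then pass to the strong dual. Transporting along the chart $\psi$, I fix a decreasing sequence of rational numbers $r_n > 1$ with $r_n \to 1$, and let $A_{r_n}$ denote the $K$-Banach space of rigid analytic functions on the closed polydisc of polyradius $(r_n,\ldots,r_n)$. By Definition \ref{ocfun}, one has
\[
\CC^\dagger(G,K) \;=\; \varinjlim_n A_{r_n}
\]
with the fringe topology on the right, i.e.\ the locally convex inductive limit.

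The next step is to verify that each restriction map $A_{r_n} \hookrightarrow A_{r_{n+1}}$ is compact. This is a standard Montel-type statement for Tate algebras: shrinking the radius improves the decay of the coefficients of a power series uniformly on the unit ball, so the image of the unit ball is totally bounded and may be approximated by polynomials of bounded multidegree. Consequently $\CC^\dagger(G,K)$ is a compact inductive limit of $K$-Banach spaces, hence of compact type and nuclear. Its strong dual therefore admits the presentation
\[
D^\dagger(G,K) \;=\; \varprojlim_n A_{r_n}'
\]
as a locally convex $K$-vector space, where $A_{r_n}' := (A_{r_n})_b'$ is a $K$-Banach space. Since transposes of compact maps are compact, the transition maps in this inverse system are compact, yielding the nuclear Fr\'{e}chet space structure on $D^\dagger(G,K)$.

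It remains to upgrade this inverse system to one of Banach $K$-algebras. Here the dagger group structure on $\Bbb{G}$ enters: the multiplication $m \colon \Bbb{G} \times \Bbb{G} \to \Bbb{G}$ is a morphism of dagger spaces, and unpacking this at the level of overconvergent functions, after possibly replacing $(r_n)$ by a cofinal subsequence, $m^\ast$ restricts to a bounded $K$-linear comultiplication
\[
m^\ast \colon A_{r_n} \longrightarrow A_{r_n} \,\hat{\otimes}_K\, A_{r_n}
\]
compatible with the transitions. Dualizing equips each $A_{r_n}'$ with a Banach $K$-algebra structure, the transition maps become algebra morphisms, and the induced multiplication on the inverse limit coincides with the convolution defining $D^\dagger(G,K)$ as an algebra.

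The main obstacle is precisely the radius-preservation in the last step. The rigid group structure on $\Bbb{G}^\rig$ only guarantees that $m^\ast$ acts on rigid functions on the closed unit polydisc, with no a priori control on slightly larger polydiscs; the dagger enhancement of $\Bbb{G}$ constructed earlier in the paper is designed to encode exactly this overconvergence of $m$, and once its functoriality on products is in hand, one can choose $(r_n)$ so that $m^\ast$ lands in $A_{r_n} \,\hat{\otimes}_K\, A_{r_n}$ for every $n$. The remainder of the argument is then formal.
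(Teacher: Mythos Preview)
Your argument for the nuclear Fr\'{e}chet \emph{space} structure is fine and matches the paper. The gap is in the algebra upgrade. You assert that, after passing to a cofinal subsequence of equal polyradii $r_n$, the comultiplication restricts to $m^*\colon A_{r_n}\to A_{r_n}\hat{\otimes}_K A_{r_n}$. The dagger structure on $\Bbb{G}$ does \emph{not} give this. It only tells you that $m^*$ lands in $W_{2d}=\varinjlim_r T_{2d,r}$, i.e.\ for each $r>1$ there is \emph{some} $r'>1$ with $m^*(T_{d,r})\subset T_{2d,r'}$; there is no mechanism to force $r'=r$, and no diagonal or cofinality trick fixes this, because what you need is that the polydisc of radius $r_n$ is a \emph{subgroup} of $\Bbb{G}_N$, which is a genuine constraint on $r_n$.

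The paper addresses exactly this point and finds that equal radii are the wrong shape in general. The estimate in Proposition~\ref{strict} shows that the supremum of $F_i$ on a polydisc is governed by $\omega(g_i)-\frac{1}{p-1}$, so different coordinates scale differently. The key technical result (Theorem~\ref{polydisc} and Corollary~\ref{groups}) is that the specific \emph{unequal} polyradii $(p^{\tau_{N,1}},\ldots,p^{\tau_{N,d}})$ with $\tau_{N,i}=\frac{1}{N+1}(\omega(g_i)-\frac{1}{p-1})$ do yield rigid analytic subgroups $\Bbb{G}_N$. The Fr\'{e}chet algebra structure then comes from $D^\dagger(G,K)\simeq\varprojlim_N (B_N)_b'$ where $B_N=\OO(\Bbb{G}_N)$. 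Your equal-radius approach would only go through in the equi-$p$-valued case; the paper explicitly flags this in Section~\ref{dist} (``we are forced to replace $r$ by a sequence of polyradii $(r_1,\ldots,r_d)$ for this to work'').
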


The Banach algebras in this Theorem are rigid distribution algebras $D^\text{rig}(\Bbb{G}_N,K)$ (see \cite[Cor.~5..1.8]{Em17} for their general definition) of a sequence of affinoid rigid analytic groups $(\Bbb{G}_N)_{N\geq 1}$ which we call the {\it{strict neighborhood groups}} of the dagger group $\Bbb{G}$. The underlying space of $\Bbb{G}_N$ is the 
polydisc $\Bbb{B}(p^{\tau_{N,1}},\ldots,p^{\tau_{N,d}})$ where $\tau_{N,i}=\frac{1}{N+1}(\omega(g_i)-\frac{1}{p-1})$. One of the key technical aspects of our work is to endow these polydiscs with a group structure compatible with the one on $\Bbb{G}$. 

Unfortunately, rigid distribution algebras such as $D^\text{rig}(\Bbb{G}_N,K)$ are not Noetherian in general. This was observed by Berthelot, and the argument is exposed in 
\cite[App.~A.2]{Clo18} for example. As a result, it is not at all clear whether $D^\dagger(G,K)$ is Fr\'{e}chet-{\it{Stein}} -- in the sense of \cite[Sect.~3]{ST03}. Although we believe this to be true in general, we can currently only prove it for certain equi-$p$-valued groups (the main example being uniform groups). This is our second main result:

\begin{thm}\label{maintwo}
Suppose $(G,\omega)$ is equi-$p$-valued, with $\omega(g_1)=\cdots=\omega(g_d)=\omega_0$. Assume $2\omega_0>\frac{p}{p-1}$. Then there is an isomorphism of topological 
$K$-algebras,
\begin{equation}\label{maintwoiso}
D^\dagger(G,K) \overset{\sim}{\longrightarrow} {\varprojlim}_{s<\theta^{1/\omega_0}}D_s(G,K),
\end{equation}
where $\theta=p^{-1/(p-1)}$ denotes the radius of convergence of the $p$-adic exponential function, and $D_s(G,K)$ are the algebras from \cite[p.~162]{ST03}. 
In particular $D^\dagger(G,K)$ is a Fr\'{e}chet-Stein algebra.
\end{thm}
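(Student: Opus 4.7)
The plan is to identify each Banach algebra in the inverse limit of Theorem \ref{mainone} with one of the Schneider-Teitelbaum algebras $D_s(G,K)$. By Theorem \ref{mainone}, $D^\dagger(G, K) \cong \varprojlim_{N \geq 1} D^{\rig}(\Bbb{G}_N, K)$ as topological $K$-algebras. In the equi-$p$-valued case all the polydisc radii coincide, so $\Bbb{G}_N$ is supported on $\Bbb{B}(p^{\tau_N}, \ldots, p^{\tau_N})$ with $\tau_N = \frac{1}{N+1}(\omega_0 - \frac{1}{p-1})$, which is positive thanks to the hypothesis $2\omega_0 > \frac{p}{p-1}$.

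The main work is then to produce, for each $N$, a topological isomorphism of Banach $K$-algebras $D^{\rig}(\Bbb{G}_N, K) \cong D_{s_N}(G, K)$ for an explicit $s_N < \theta^{1/\omega_0}$ satisfying $s_N \nearrow \theta^{1/\omega_0}$ as $N \to \infty$. To construct it I expand elements of both algebras in the basis $\{b^\alpha\}$ with $b_i = g_i - 1 \in K[G]$ and compare norms. The rigid distribution norm on $D^{\rig}(\Bbb{G}_N, K)$ is computed by duality with the sup norm on $\OO(\Bbb{G}_N)$; in the equi-$p$-valued case, the Schneider-Teitelbaum norm on $D_s(G, K)$ from \cite[p.~162]{ST03} takes a comparable form on the same basis. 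Matching the two identifies $s_N$ as an explicit function of $\tau_N$ and $\omega_0$, and the sequence $(s_N)$ is cofinal in the directed set $\{s : s < \theta^{1/\omega_0}\}$. This upgrades the inverse limit in Theorem \ref{mainone} to the asserted isomorphism (\ref{maintwoiso}).

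For the Fr\'echet-Stein conclusion, I invoke the Noetherianity of each $D_s(G,K)$ and the flatness-with-dense-image of the transition maps $D_{s'}(G,K) \to D_s(G,K)$ (for $s < s' < \theta^{1/\omega_0}$) established in \cite{ST03}. The hypothesis $2\omega_0 > \frac{p}{p-1}$ plays a crucial role here: combined with the $p$-valued axiom $\omega([g, h]) \geq \omega(g) + \omega(h)$ and the saturation axiom, it places the commutators $[g_i, g_j]$ inside $G^p$, which is precisely what is needed to control the commutator expansions of $[b_i, b_j]$ and confirm that the Banach algebra structure on $D_s(G, K)$ persists throughout the range $s < \theta^{1/\omega_0}$.

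The main obstacle is the explicit Banach algebra comparison: matching Banach-space bases is mechanical, but verifying that the multiplicative structure inherited from the dagger group $\Bbb{G}$ corresponds to the convolution product on $D_s(G, K)$ in \cite{ST03} requires careful tracking of the group multiplication through both the chart $\psi$ and the rigid analytic coordinates on each strict neighborhood. It is precisely here that the equi-$p$-valued hypothesis provides the essential simplification, since it collapses the weights in the relevant norm formulas to a single uniform exponent $\omega_0$.
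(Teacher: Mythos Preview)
Your overall direction—comparing the Fr\'{e}chet structure from Theorem \ref{mainone} with the Schneider--Teitelbaum system $(D_s(G,K))_s$—is exactly right, and your discussion of the role of (HYP) for the Fr\'{e}chet--Stein conclusion is fine. But the core step you propose does not work: the Banach algebras $D^{\rig}(\Bbb{G}_N,K)=(B_N)_b'$ and $D_{s_N}(G,K)$ are \emph{not} isomorphic for any choice of $s_N$.

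Concretely, by Lemma \ref{distpoly} a distribution $\sum_\alpha d_\alpha \mathbf{m}_\alpha \in (B_N)_b'$ has norm $\sup_\alpha |\alpha! d_\alpha|\,p^{-\tau_N|\alpha|}$, which is an $\ell^\infty$-type condition with weight $|\alpha!|\,p^{-\tau_N|\alpha|}$; by contrast $D_s(G,K)$ consists of those $\sum_\alpha d_\alpha \mathbf{b}^\alpha$ with $|d_\alpha|\,s^{\omega_0|\alpha|}\to 0$, a $c_0$-type condition with weight $s^{\omega_0|\alpha|}$. Even if you tried to match weights, the factor $|\alpha!|=\theta^{|\alpha|}\cdot p^{\sum_i s_p(\alpha_i)/(p-1)}$ has the unbounded correction $p^{\sum_i s_p(\alpha_i)/(p-1)}$, so no single $s$ makes the two norms equivalent. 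There is no levelwise identification to be had.

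What the paper actually does is an interleaving (sandwich) argument. One shows: (i) for each $s<\theta^{1/\omega_0}$ there is some $N$ and a continuous map $(B_N)_b'\to D_s(G,K)$ sending $\mathbf{m}_\alpha\mapsto\mathbf{b}^\alpha$; and (ii) for each $N$ there is some $s$ close enough to $\theta^{1/\omega_0}$ and a continuous map $D_s(G,K)\to(B_N)_b'$ sending $\mathbf{b}^\alpha\mapsto\mathbf{m}_\alpha$. Both estimates come from the asymptotic $v(\alpha!)=\frac{|\alpha|}{p-1}+O(\log|\alpha|)$, which is precisely the obstruction to your levelwise isomorphism but is harmless for producing one-way continuous maps with a bit of slack in the parameters. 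In the equi-$p$-valued case $\min_i\omega(g_i)=\max_i\omega(g_i)=\omega_0$, so the two interleaved limits collapse to the same system and one obtains $D^\dagger(G,K)\simeq\varprojlim_{s<\theta^{1/\omega_0}}D_s(G,K)$ as topological algebras. The Fr\'{e}chet--Stein conclusion then follows from \cite[Thm.~4.5, Thm.~4.9]{ST03} as you say.
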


The inequality $2\omega_0>\frac{p}{p-1}$ is exactly the condition (HYP) in \cite[p.~163]{ST03}. As we explain in Example \ref{uniform} this is satisfied for a uniform pro-$p$-group
$G$ endowed with the canonical $p$-valuation $\omega^\text{can}$ which gives the lower $p$-series. In this example $\omega_0=\begin{cases} 1 & \text{if $p>2$} \\ 2 & \text{if $p=2$} \end{cases}$.

We emphasize that, in the situation of Theorem \ref{maintwo}, the Fr\'{e}chet-Stein structure on $D^\dagger(G,K)$ follows immediately by appealing to \cite[Thm.~4.5, Thm.~4.9]{ST03}.
Our contribution is the isomorphism (\ref{maintwoiso}).

In Section \ref{overc} we introduce the notion of an overconvergent $G$-representation (of compact type) and establish a duality \`{a} la \cite{ST02} with continuous 
$D^\dagger(G,K)$-modules on nuclear Fr\'{e}chet spaces.

As a future project we intend to view the base change map in \cite[Thm.~4.1]{Clo17} through the lens of $\dagger$-distribution algebras. Classically, unramified base change is given by a morphism $b: \mathcal{H}_{\Q_{p^r}} \longrightarrow \mathcal{H}_{\Q_{p}}$ between spherical Hecke algebras (for $\GL_n$ say, to fix ideas). Clozel has defined an analogous base change map $b^\text{rig}: D^\text{rig}(I_r,K)\longrightarrow D^\text{rig}(I,K)$ between the rigid distribution algebras for the pro-$p$ Iwahori subgroups $I$ and $I_r$ in $\GL_2(\Q_p)$ and $\GL_2(\Q_{p^r})$ respectively -- assuming $p$ is large enough. (Our notation differs from Clozel's -- he denotes our $b^\text{rig}$ by $b: \mathcal{D}_r \rightarrow \mathcal{D}$.) A natural question is whether there is a $\dagger$-version 
$b^\dagger: D^\dagger(I_r,K)\longrightarrow D^\dagger(I,K)$. As noted above, we expect the algebra $D^\dagger(I,K)$ to be Fr\'{e}chet-Stein (and similarly for $I_r$) but we do not currently have a proof of this. 


\section{Notation}

We let $\N_0=\{0,1,2,\ldots\}$. If $\KK$ is a field, a power series in $\KK[\![X_1,\ldots,X_n]\!]$ will be written as $\sum_{\alpha\in \N_0^n}c_{\alpha}X^\alpha$.
Here $\alpha=(\alpha_1,\ldots,\alpha_n)$ is a tuple of non-negative integers and $X^\alpha=X_1^{\alpha_1}\cdots X_n^{\alpha_n}$. We adopt the notation 
$|\alpha|=\alpha_1+\cdots+\alpha_n$. 

When $\KK$ is a non-archimedean local field, $T_n$ denotes the Tate algebra of all power series $\sum_{\alpha\in \N_0^n}c_{\alpha}X^\alpha$
satisfying the condition $c_\alpha \longrightarrow 0$ as $|\alpha|\longrightarrow \infty$. The Gauss norm of such a power series is 
$$
\|{\sum}_{\alpha\in \N_0^n}c_{\alpha}X^\alpha\|={\sup}_{\alpha\in \N_0^n}|c_{\alpha}|,
$$
and $T_n$ thus becomes a Banach algebra over $\KK$.

\section{The category of dagger spaces}

Let $\KK$ be a non-archimedean local field with absolute value $|\cdot|$. It extends uniquely to an absolute value on the algebraic closure $\overline{\KK}$, and we let 
$\Gamma=|\overline{\KK}^\times|$ denote the valuation subgroup of $\R_{>0}$. (Eventually we will take $\KK=\Q_p$, in which case $\Gamma=p^{\Q}$.) In this section we review the category of dagger spaces over $\KK$ introduced in \cite[Df.~2.12]{GK00}.

\subsection{Dagger algebras}

We start with the Washnitzer algebra $W_n=\KK\langle X_1,\ldots, X_n \rangle^\dagger$ in the variables $X_1,\ldots, X_n$. This is the subalgebra of the Tate algebra $T_n$ consisting of all power series $\sum_{\alpha\in \N_0^n}c_{\alpha}X^\alpha\in \KK[\![X_1,\ldots,X_n]\!]$ such that $|c_{\alpha}|r^{|\alpha|}\longrightarrow 0$ as $|\alpha|\longrightarrow \infty$, for some $r>1$. Since $\Gamma$ is dense in $\R_{>0}$ we may and will assume $r \in \Gamma$.

\begin{defn}
For $r \in \Gamma_{>1}$ we let
$$
T_{n,r}=\bigg\{{\sum}_{\alpha\in \N_0^n}c_{\alpha}X^\alpha\in \KK[\![X_1,\ldots,X_n]\!]: {\lim}_{|\alpha|\longrightarrow \infty}|c_{\alpha}|r^{|\alpha|}=0\bigg\}.
$$
By \cite[Sect.~6.1.5, Thm.~4]{BGR84} this is a $\KK$-affinoid algebra with norm
$$
\|{\sum}_{\alpha\in \N_0^n}c_{\alpha}X^\alpha\|_r={\sup}_{\alpha\in \N_0^n}|c_{\alpha}|r^{|\alpha|}.
$$
\end{defn}

There are two natural ways to put a topology on $W_n$. There is the topology induced by the Gauss norm on the ambient $T_n$, but to us the following finer topology will be more important in the sequel.
Obviously $W_n=\bigcup_{r\in \Gamma_{>1}} T_{n,r}$, and we endow $W_n$ with the finest locally convex topology for which all the inclusions
$T_{n,r} \hookrightarrow W_n$ are continuous. See \cite[p.~22]{Sch02} for more details on locally convex final topologies. In \cite[Sect.~4.2]{GK00} the latter is referred to as the direct limit topology on $W_n$. In \cite[Df.~2.3.7]{Ked06} it is called the fringe topology. 

A dagger algebra is a $\KK$-algebra $A$ which is isomorphic to $W_n/I$ for some ideal $I \subset W_n$, for some $n$. The quotient semi-norm on $W_n/I$ turns out to be a norm \cite[Prop.~1.6]{GK00}. However, we prefer to endow $A$ with the direct limit topology as in \cite[Sect.~4.2]{GK00}: $A\simeq \varinjlim_{r \rightarrow 1} T_{n,r}/IT_{n,r}$, where we take the limit over sufficiently small $r\in \Gamma_{>1}$ (smaller than an $r_0>1$ for which $T_{n,r_0}\supset I$). 

\begin{defn}
For $A$ as above, we let $\hat{A}$ denote its completion with respect to the natural equivalence class of norms on $A \simeq W_n/I$. (In \cite{GK00} the completion is denoted by $A'$. To avoid confusion with duals later on, we adopt the notation $\hat{A}$.)
\end{defn}

The completion $\hat{A}$ is an affinoid $\KK$-algebra, with an associated rigid space $\Sp(\hat{A})$.

\subsection{Dagger spaces}\label{dagspc}

$\Sp(A)$ is the set of maximal ideals of a dagger algebra $A$. By \cite[Thm.~1.7]{GK00} there is a bijection $\Sp(\hat{A}) \iso \Sp(A)$ given by pulling back along the completion
$A \rightarrow \hat{A}$. Via this bijection we transfer the usual $G$-topology on $\Sp(\hat{A})$ to a $G$-topology on $\Sp(A)$. As in rigid geometry, the structure sheaf $\OO_{\Sp(A)}$ is determined by its values on affinoid subdomains $U \subset \Sp(A)$. We will not need the exact definitions in our paper, and we refer the reader to \cite[Sect.~2]{GK00} for all details. We emphasize that $\OO_{\Sp(A)}(U)$ is again a dagger algebra for such $U$.

A dagger space over $\KK$ is then a locally $G$-ringed space $(X,\OO_X)$ which admits an admissible covering $X=\bigcup_i U_i$ such that all $(U_i,\OO_X|_{U_i})$ are affinoid $\KK$-dagger spaces (that is, isomorphic to $\Sp(A_i)$ for some dagger algebras $A_i$). Morphisms between dagger spaces are morphisms of locally $G$-ringed spaces. This gives a 
category $\Dag_{\KK}$ which admits fiber products by  \cite[1.16, 2.13]{GK00}. Moreover, by \cite[Thm.~2.19]{GK00} there is a faithful functor $\widehat{(\cdot)}: \Dag_{\KK}\longrightarrow \text{Rig}_{\KK}$ to the category of rigid spaces over $\KK$ which takes an affinoid dagger space $\Sp(A)$ to the rigid space $\Sp(\hat{A})$.

\subsection{Distributions}

Let $A$ be a dagger algebra over $\KK$, expressed as a locally convex inductive limit of $\KK$-affinoid algebras $A_r=T_{n,r}/IT_{n,r}$, indexed by small enough $r\in \Gamma_{>1}$ as above. If $r_1\leq r_2$, the transition map $A_{r_2} \rightarrow A_{r_1}$ is compact by \cite[Prop.~2.1.16]{Em17} for example.
(Note that $\Sp(A_r)$ is a closed subvariety of a rigid polydisc.) See also \cite[p.~98]{Sch02}. In other words, $A$ is a locally convex space of compact type, cf. \cite[Df.~1.1.16, part (ii)]{Em17}. By passing to quotients of the $A_r$, one can write $A$ as an inductive limit of affinoid algebras with compact and {\it{injective}} transition maps, as explained in \cite[p.~15]{Em17} for instance.  

\begin{defn}
The space of distributions on $\Sp(A)$ is the strong dual $A_b'=\mathcal{L}(A,\KK)_b$. (We stress that $A$ carries the direct limit topology here.)
\end{defn}

Thus $A_b'$ is a nuclear Fr\'{e}chet space. If the transition maps $A_{r_2} \rightarrow A_{r_1}$ are injective (this happens when $A=W_n$ for example) \cite[Prop.~16.10]{Sch02}
gives a topological isomorphism
$$
A_b' \overset{\sim}{\longrightarrow} \varprojlim (A_r)_b'. 
$$
Moreover, each $(A_r)_b'$ is just the dual $\mathcal{L}(A_r,\KK)$ with the operator norm topology, cf. \cite[Rem.~6.7]{Sch02}.


\section{Dagger groups}

\subsection{Group objects}

$\Dag_{\KK}$ has a terminal object $\Sp(\KK)$, and finite products such as $X \times_{\Sp(\KK)}Y$ exist. It therefore makes sense to talk about group objects in $\Dag_{\KK}$. 

\begin{defn}
A dagger group over $\KK$ is a group object in the category $\Dag_{\KK}$. 
\end{defn}

Thus a dagger group $G$ comes with three morphisms in $\Dag_{\KK}$,
$$
G \times_{\Sp(\KK)} G \overset{m}{\longrightarrow} G, \y \y \y \Sp(\KK)\overset{e}{\longrightarrow} G, \y \y \y G\overset{i}{\longrightarrow} G,
$$
satisfying the usual conditions ($m$ is associative, $e$ is a two-sided unit, and $i$ is a two-sided inverse). Equivalently $X \mapsto \Hom_{\Dag_{\KK}}(X,G)$ gives a contravariant functor from $\Dag_\KK$ to the category of groups. 

All dagger groups considered later in this paper will be affinoid. Thus $G=\Sp(A)$ for a dagger algebra $A$ equipped with morphisms of $\KK$-algebras 
\begin{equation}\label{coalg}
A \overset{m^*}{\longrightarrow} A \otimes_\KK^\dagger A, \y \y \y A \overset{e^*}{\longrightarrow} \KK, \y \y \y A \overset{i^*}{\longrightarrow} A,
\end{equation}
such that the standard diagrams commute. See \cite[p.~22]{Spr09} for instance. We recall the definition of the tensor product $\otimes_\KK^\dagger$ from \cite[1.16]{GK00}: If $A_1$ and $A_2$ are dagger algebras, first write them as quotients of Washnitzer algebras $W_{n_i}\twoheadrightarrow A_i$, for $i=1,2$. Then $A_1 \otimes_\KK^\dagger A_2$ is defined to be the image of $W_{n_1+n_2}$ under the natural map $T_{n_1+n_2} \rightarrow \hat{A}_1 \hat{\otimes}_\KK \hat{A}_2$. For example, $W_{n_1}  \otimes_\KK^\dagger W_{n_2}=W_{n_1+n_2}$.

\begin{rem}\label{tens}
As a locally convex space, $A_1 \otimes_\KK^\dagger A_2$ is the same as the tensor product $A_1 \widehat{\otimes}_{\KK,\pi} A_2$ considered in \cite[p.~107]{Sch02} and 
\cite[Prop.~1.1.32]{Em17} for example. (The latter is the Hausdorff completion of $A_1 \otimes_\KK A_2$ with the projective tensor product topology, cf. \cite[Ch.~17, part B]{Sch02}. Under our assumptions $A_1 \otimes_{\KK,\pi} A_2$ coincides with the inductive tensor product $A_1 \otimes_{\KK,\iota} A_2$, cf. \cite[Ch.~17, part A]{Sch02}, and we may drop the subscripts $\pi$ and $\iota$.) To see this, let $A_i=W_{n_i}$ for simplicity. Then, by \cite[Prop.~1.1.32]{Em17},  
$$
W_{n_1} \widehat{\otimes}_{\KK,\pi} W_{n_2}=\varinjlim_{r_1,r_2} T_{n_1,r_1}\widehat{\otimes}_\KK T_{n_2,r_2}=\varinjlim_{r} T_{n_1,r}\widehat{\otimes}_\KK T_{n_2,r}=
\varinjlim_{r} T_{n_1+n_2,r}=W_{n_1+n_2}.
$$
\end{rem}

\begin{rem}
We note that the morphisms in (\ref{coalg}) are automatically continuous for the norm topology by \cite[Prop.~1.6]{GK00}. See also \cite[Df.~2.2.2]{Ked06}. In fact, any $\KK$-algebra morphism between dagger algebras is automatically continuous -- not just for the norm topology, but also with respect to the direct limit topology by \cite[Lem.~1.8]{GK00} and \cite[Cor.~2.3.3, Df.~2.3.7]{Ked06}.
\end{rem}

\subsection{Distribution algebras}\label{dist}

We consider the duals of the maps in (\ref{coalg}). First we observe that by Remark \ref{tens}, combined with \cite[Prop.~20.13]{Sch02} and \cite[Prop.~1.1.32]{Em17}, we have 
$$
(A \otimes_\KK^\dagger A)_b'=(A \widehat{\otimes}_{\KK,\pi} A)_b'=A_b' \widehat{\otimes}_{\KK,\pi} A_b'.
$$
Thus the dual of $m^*$ gives a jointly continuous and associative $\KK$-bilinear multiplication map on the space of distributions
$$
\ast: A_b' \times A_b' \longrightarrow A_b'.
$$
(Associativity follows from dualizing the first diagram in \cite[p.~22]{Spr09}.) Furthermore, the dual of $e^*$ gives a two-sided unit 
$\KK \longrightarrow A_b'$ for $\ast$ (by dualizing the third diagram in  \cite[p.~22]{Spr09}). 

We conclude that any dagger group of the form $G=\Sp(A)$ gives rise to a distribution algebra, whose underlying space $A_b'$ is nuclear Fr\'{e}chet. One of the themes of this article is 
to show $A_b'$ is a nuclear Fr\'{e}chet {\it{algebra}}, see \cite[Df.~1.2.12]{Em17}, for $G$ arising naturally from certain $p$-valued groups. Essentially what this means is we want to promote $\Sp(A_r)$ to a rigid analytic group for a cofinal set of $r \in \Gamma_{>1}$. In general, for a rank $d$ saturated group, we are forced to replace $r$ by a sequence of polyradii $(r_1,\ldots,r_d)$ for this to work. See \ref{exist} below. 

\section{Examples of dagger groups}

\subsection{A refinement of \cite[Prop.~29.2]{Sch11}}

Let $(G,\omega)$ be a $p$-valued group with ordered basis $(g_1,\ldots,g_d)$. For any (finite) sequence of elements $(h_1,\ldots,h_s)$ from $G$, and any tuple $(y_1,\ldots,y_s) \in \Z_p^s$, we form the product $h_1^{y_1}\cdots h_s^{y_s}$ and express it as $g_1^{x_1}\cdots g_d^{x_d}$ for a uniquely determined tuple $(x_1,\ldots,x_d)\in \Z_p^d$.  
\cite[Prop.~29.2]{Sch11} tells us that $x_i=F_i(y_1,\ldots,y_s)$ for certain power series $F_i \in \Q_p\langle Y_1,\ldots,Y_s\rangle$. 

A close inspection of the proof immediately gives the 
following strengthening: 

\begin{prop}\label{schref}
With notation as above, $F_i \in \Q_p\langle Y_1,\ldots,Y_s\rangle^\dagger$ for all $i=1,\ldots,d$. 
\end{prop}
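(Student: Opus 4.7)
The plan is to inspect Schneider's proof of \cite[Prop.~29.2]{Sch11} and track the coefficient estimates at each step, showing that they are already strong enough to yield membership in the Washnitzer algebra $\Q_p\langle Y_1,\ldots,Y_s\rangle^\dagger$. Recall that this membership requires a linear lower bound $v(c_\alpha) \geq \epsilon|\alpha| - C$ with $\epsilon > 0$ on the Taylor coefficients, which is genuinely stronger than the condition $v(c_\alpha) \to +\infty$ that characterises the ambient Tate algebra.

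Schneider's argument is iterative and rests on two basic building blocks. First, for a single element $h_j$, the map $y_j \mapsto h_j^{y_j}$, expressed in the ordered basis coordinates of $G$, yields one-variable power series $u_{j,i}(y_j)\in\Q_p\langle Y_j\rangle$. Second, the group law in these coordinates, i.e.\ the tuple of power series $c_i(A_1,\ldots,A_d,B_1,\ldots,B_d)$ characterised by $(g_1^{a_1}\cdots g_d^{a_d})(g_1^{b_1}\cdots g_d^{b_d}) = g_1^{c_1(a,b)}\cdots g_d^{c_d(a,b)}$. Iterated substitution of the former into the latter produces the $F_i$. My strategy is to verify that both building blocks already lie in the relevant Washnitzer algebras, and then to invoke the standard fact that composition of overconvergent series, with inner tuple vanishing at $0$ and bounded by $1$, remains overconvergent (immediate from the inductive-limit description $W_n = \varinjlim_r T_{n,r}$, together with the estimate $\|f\circ g\|_{r'}\leq \|f\|_r$ when each $\|g_i\|_{r'}\leq r$).

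The overconvergence of the individual building blocks should be extracted directly from the commutator collection process underlying Schneider's proof. The key quantitative input is the $p$-valuation axiom $\omega(ghg^{-1}h^{-1})\geq\omega(g)+\omega(h)$, which forces an $n$-fold nested commutator of basis elements to have $\omega\geq n\omega_0$, where $\omega_0 = \min_i\omega(g_i) > 0$. As one reorders a word $h_1^{y_1}\cdots h_s^{y_s}$ into ordered basis form through iterated commutation, each emerging commutator contributes a $p$-adic factor whose valuation is at least proportional to its length; combined with the axiom $\omega(g^p) = \omega(g) + 1$ this yields the desired linear lower bound $v(c_\alpha)\geq \epsilon|\alpha| - C$ for a uniform $\epsilon > 0$ depending only on $\omega_0$ and $p$. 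Crucially, this argument uses only the axioms of a $p$-valuation and no additional hypothesis such as $\omega > 1/(p-1)$, so it applies to any saturated group as in the statement of the proposition. (An alternative route in the presence of the stronger bound $\omega > 1/(p-1)$ would be to write $h_j^{y_j} = \exp(y_j\log h_j)$ and to handle the group law via the Campbell--Hausdorff series, whose overconvergence is then transparent, but this restricts the generality.)

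The main obstacle will be the combinatorial bookkeeping needed to extract a uniform $\epsilon > 0$ from Schneider's iterative commutator collection without losing track of the overconvergence radius under the many intermediate substitutions. For the qualitative statement $F_i\in\Q_p\langle Y_1,\ldots,Y_s\rangle^\dagger$ any positive $\epsilon$ will do, but the explicit polyradius of convergence will reappear later when the $F_i$ are to be identified as coming from the strict neighborhood groups $\Bbb{G}_N$ of Theorem \ref{mainone}.
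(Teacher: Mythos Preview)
Your plan would work in principle, but it is far more elaborate than what the situation calls for. The paper's argument is nearly a one-liner: in \cite[p.~204]{Sch11} the series $F_i$ is displayed explicitly as $\sum_{\beta} \frac{c_{\underline{i},\beta}}{\beta_1!\cdots\beta_s!}F_\beta$, with $F_\beta\in\Z[Y_1,\ldots,Y_s]$ of degree $|\beta|$, and Schneider's own coefficient estimate already reads
\[
v\Bigl(\frac{c_{\underline{i},\beta}}{\beta_1!\cdots\beta_s!}\Bigr)\ \geq\ -\omega(g_i)+\sum_{j=1}^s \beta_j\Bigl(\omega(h_j)-\tfrac{1}{p-1}\Bigr).
\]
Since $\omega(h_j)>\tfrac{1}{p-1}$ for every $j$, the right-hand side is a linear function of $|\beta|$ with strictly positive slope. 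Thus for any rational $\rho$ with $0<\rho<\min_j\omega(h_j)-\tfrac{1}{p-1}$ the terms converge to zero in $\|\cdot\|_{p^\rho}$, so $F_i\in T_{s,p^\rho}\subset \Q_p\langle Y_1,\ldots,Y_s\rangle^\dagger$. No iterative commutator collection, no composition estimates, no tracking of shrinking radii: the linear bound you are after is already sitting in Schneider's proof, and one only has to read it off.

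Your decomposition into one-variable building blocks $u_{j,i}$ and group-law series $c_i$, followed by closure of Washnitzer algebras under suitable composition, is a legitimate alternative and would recover the same result. But it reconstructs from scratch information that the single displayed inequality above already packages, and the ``combinatorial bookkeeping'' you flag as the main obstacle is entirely avoidable. If you do pursue this route, note that the composition step requires the inner series to be power-bounded in the relevant $T_{n,r'}$, not merely to vanish at the origin; this must be checked at each of the $s-1$ substitutions and is where the radius control becomes delicate.

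One correction to your parenthetical remark: the inequality $\omega>\tfrac{1}{p-1}$ is not an ``additional hypothesis'' but is axiom (d) in the very definition of a $p$-valuation \cite[p.~169]{Sch11}. It is precisely this axiom that makes the slopes $\omega(h_j)-\tfrac{1}{p-1}$ positive in the estimate above, and it also means that your exp/log Campbell--Hausdorff alternative is available in full generality, not only under a ``stronger bound''.
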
 

\begin{proof}
For ease of comparison we will adopt the notation from \cite[Prop.~29.2]{Sch11} and its proof, except that the rank of $(G,\omega)$ is $d$ and not $r$. 

The power series $F_i \in \Q_p[\![Y_1,\ldots,Y_s]\!]$ are given explicitly in \cite[p.~204]{Sch11}. They have the form 
\begin{equation}\label{Fi}
F_i=\sum_{\beta\in \N_0^s} \frac{c_{\underline{i},\beta}}{\beta_1!\cdots \beta_s!}F_\beta,
\end{equation}
where $F_\beta=\prod_{j=1}^s Y_j(Y_j-1)\cdots(Y_j-\beta_j+1)$ is a degree $|\beta|$ polynomial in $\Z[Y_1,\ldots,Y_s]$, and the coefficients 
$c_{\alpha,\beta}\in \Z_p$ are defined by (43) in \cite[p.~203]{Sch11}. For $\alpha$ we are taking the tuple $\underline{i}=(\ldots,0,1,0,\ldots)$ with a $1$ in the $i^\text{th}$ slot, and zeroes elsewhere. The key estimate in the proof of \cite[Prop.~29.2]{Sch11} is the following inequality from \cite[p.~204]{Sch11}:
$$
v\bigg(\frac{c_{\underline{i},\beta}}{\beta_1!\cdots \beta_s!}\bigg) \geq -\omega(g_i)+\sum_{j=1}^s \beta_j \bigg(\omega(h_j)-\frac{1}{p-1} \bigg).
$$
Pick any $\rho\in \Q$ in the interval $0<\rho<M-\frac{1}{p-1}$, where $M=\min_j \omega(h_j)$. Then the inequalities
$$
v\bigg(\frac{c_{\underline{i},\beta}}{\beta_1!\cdots \beta_s!}\bigg)-\rho|\beta|\geq -\omega(g_i)+\sum_{j=1}^s \beta_j \bigg(\omega(h_j)-\frac{1}{p-1}-\rho \bigg)
\geq -\omega(g_i)+\big(M-\frac{1}{p-1}-\rho\big)|\beta|
$$ 
show that $v\bigg(\frac{c_{\underline{i},\beta}}{\beta_1!\cdots \beta_s!}\bigg)-\rho|\beta|\longrightarrow \infty$ as $|\beta|\longrightarrow \infty$. This shows that the terms of (\ref{Fi})
converge to zero relative to the norm $\|\cdot\|_{p^\rho}$, and therefore all the $F_i$ belong to the subalgebra $T_{s,p^\rho}\subset  \Q_p\langle Y_1,\ldots,Y_s\rangle^\dagger$. 
\end{proof}

\begin{rem}\label{sat}
If $(G,\omega)$ is saturated (see \cite[p.~187]{Sch11} for the definition) the power series $F_i$ have coefficients in $\Z_p$. This is \cite[Rem.~29.3]{Sch11}.
\end{rem}

\subsection{A refinement of \cite[Prop.~4.1]{LS23}}

Start with a saturated $p$-valued group $(G,\omega)$ and choose an ordered basis $(g_1,\ldots,g_d)$. In \cite{LS23} two of us (A. L. and C. S.) defined an affinoid rigid analytic group $\Bbb{G}^\text{rig}$ over $\Q_p$ associated with this data. The defining properties are:
\begin{itemize}
\item[(a)] $\Bbb{G}^\text{rig}=\Sp(\Q_p\langle Z_1,\ldots,Z_d\rangle)$ as a rigid analytic space over $\Q_p$;
\item[(b)] The map $\varphi \mapsto g_1^{\varphi(Z_1)}\cdots g_d^{\varphi(Z_d)}$ gives an isomorphism of abstract groups
$$
\Bbb{G}^\text{rig}(\Q_p)=\Hom_{\Q_p-\text{alg}}(\Q_p\langle Z_1,\ldots,Z_d\rangle,\Q_p) \overset{\sim}{\longrightarrow} G.
$$
\end{itemize} 
Here we will promote $\Bbb{G}^\text{rig}$ to a dagger group $\Bbb{G}$. Thus $\widehat{\Bbb{G}}=\Bbb{G}^\text{rig}$ where $\widehat{(\cdot)}$ is the completion functor from Section \ref{dagspc}.

\begin{prop}\label{LS23}
Let $(G,\omega)$ be a saturated group with ordered basis $(g_1,\ldots,g_d)$. Then there is a unique dagger group $\Bbb{G}$ over $\Q_p$ such that
\begin{itemize}
\item[(a)] $\Bbb{G}=\Sp(\Q_p\langle Z_1,\ldots,Z_d\rangle^\dagger)$ as a dagger space over $\Q_p$;
\item[(b)] The map $\varphi \mapsto g_1^{\varphi(Z_1)}\cdots g_d^{\varphi(Z_d)}$ gives an isomorphism of abstract groups
$$
\Bbb{G}(\Q_p)=\Hom_{\Q_p-\text{alg}}(\Q_p\langle Z_1,\ldots,Z_d\rangle^\dagger,\Q_p) \overset{\sim}{\longrightarrow} G.
$$
\end{itemize} 
Moreover, if $(g_1',\ldots,g_d')$ is another ordered basis for $(G,\omega)$ with associated dagger group $\Bbb{G}'$, then there is a unique isomorphism of dagger groups 
$\Bbb{G} \overset{\sim}{\longrightarrow} \Bbb{G}'$ such that the diagram below commutes:
\[
\begin{tikzcd}
    \Bbb{G}(\Q_p) \arrow{rr}{\sim} \arrow[swap]{dr}{} & &  \Bbb{G}'(\Q_p)  \arrow{dl}{} \\[10pt]
    & G
\end{tikzcd}
\]
(The horizontal arrow is the isomorphism induced on $\Q_p$-points. The slanted arrows are the ones from property (b).)
\end{prop}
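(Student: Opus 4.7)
The plan is to construct the three structure morphisms of a dagger group directly on $A := \Q_p\langle Z_1,\ldots,Z_d\rangle^\dagger$, verify the group axioms by descending along the faithful completion functor $\widehat{(\cdot)}: \Dag_{\Q_p} \to \text{Rig}_{\Q_p}$, and deduce uniqueness from the fact that a function in a Tate algebra over $\Q_p$ is determined by its values on the integral points.

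For the comultiplication, I would use the identification $A \otimes_{\Q_p}^\dagger A \simeq \Q_p\langle X_1,\ldots,X_d,Y_1,\ldots,Y_d\rangle^\dagger$ recalled in the excerpt, and apply Proposition~\ref{schref} to the $2d$-tuple $(g_1,\ldots,g_d,g_1,\ldots,g_d)$; the resulting overconvergent series $F_1,\ldots,F_d$ satisfy
\[
g_1^{x_1}\cdots g_d^{x_d}\cdot g_1^{y_1}\cdots g_d^{y_d} \;=\; g_1^{F_1(x,y)}\cdots g_d^{F_d(x,y)} \quad \text{for all } (x,y) \in \Z_p^{2d},
\]
and I set $m^*(Z_i) := F_i$. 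For the antipode, applying Proposition~\ref{schref} to $(g_d^{-1},\ldots,g_1^{-1})$ produces overconvergent $I_i \in A$ representing inversion, and I set $i^*(Z_i) := I_i$; the counit is $e^*(Z_i) := 0$, corresponding to $1 \in G$. Each extends uniquely to a $\Q_p$-algebra morphism of dagger algebras.

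The group axioms then follow by faithfulness of completion \cite[Thm.~2.19]{GK00}: the completions of $m^*, e^*, i^*$ are determined by power-series identities that hold on all of $\Z_p^{2d}$ (resp.\ $\Z_p^d$), hence coincide with the structure morphisms of $\Bbb{G}^\text{rig}$ constructed in \cite{LS23}. Since the rigid diagrams commute, the same diagrams already commute in $\Dag_{\Q_p}$. For the $\Q_p$-point identification, any $\Q_p$-algebra map $\chi: A \to \Q_p$ is continuous for the direct limit topology, and its restriction to $T_{d,r}$ is bounded for each $r \in \Gamma_{>1}$; the estimate $|\chi(Z_i)|^n = |\chi(Z_i^n)| \leq \|\chi|_{T_{d,r}}\|\cdot r^n$ forces $|\chi(Z_i)| \leq r$ for all such $r$, hence $\chi(Z_i) \in \Z_p$. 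Thus $\chi$ factors through the Tate completion, giving a natural bijection $\Bbb{G}(\Q_p) \iso \Bbb{G}^\text{rig}(\Q_p)$, and the group isomorphism with $G$ transfers from \cite{LS23}.

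Finally, any dagger group structure on $A$ satisfying (a) and (b) has $m^*(Z_i) \in A \otimes^\dagger A \hookrightarrow T_{2d}$ pinned down by its values on $\Z_p^{2d}$ via (b); the same argument handles $e^*$ and $i^*$, giving uniqueness. For a second ordered basis $(g_1',\ldots,g_d')$, applying Proposition~\ref{schref} once more yields overconvergent $F_i'$ with $g_1^{z_1}\cdots g_d^{z_d} = (g_1')^{F_1'(z)}\cdots (g_d')^{F_d'(z)}$. The prescription $Z_i' \mapsto F_i'$ defines a morphism of dagger algebras whose completion is the corresponding rigid isomorphism, which upgrades to a dagger group isomorphism by the same faithfulness argument; its uniqueness is again immediate. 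The real content of the proposition sits in Proposition~\ref{schref}, which guarantees the overconvergence of the multiplication, inversion, and change-of-basis series; everything else is formal propagation through the completion functor on top of the rigid-analytic picture of \cite{LS23}.
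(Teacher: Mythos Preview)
Your proposal is correct and follows essentially the same route as the paper: both construct $m^*$, $i^*$, $e^*$ by applying Proposition~\ref{schref} to the sequences $(g_1,\ldots,g_d,g_1,\ldots,g_d)$ and $(g_d^{-1},\ldots,g_1^{-1})$, and then verify the group axioms and uniqueness by reducing to the rigid picture of \cite{LS23}. The only minor differences are that the paper invokes Remark~\ref{sat} (integrality of the $F_i$) together with weak completeness to justify that $Z_i \mapsto F_i$ extends to a dagger algebra map, and phrases the verification of the group axioms as ``$(F_1,\ldots,F_d)$ is a formal group law'' rather than your faithfulness-of-completion argument; these are equivalent formulations of the same idea.
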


\begin{proof}
In light of Proposition \ref{schref}, the proofs of \cite[Prop.~4.1, Prop.~4.3]{LS23} apply almost verbatim in this context. For example, \ref{schref} yields $d$ power series $F_1,\ldots,F_d\in 
\Q_p\langle X_1,\ldots,X_d,Y_1,\ldots,Y_d\rangle^\dagger$ such that
$$
(g_1^{x_1}\cdots g_d^{x_d})(g_1^{y_1}\cdots g_d^{y_d})=g_1^{F_1(x,y)}\cdots g_d^{F_d(x,y)}
$$
for all tuples $x=(x_1,\ldots,x_d)$ and $y=(y_1,\ldots,y_d)$ in $\Z_p^d$. Furthermore, the $F_i$ have coefficients in $\Z_p$ by Remark \ref{sat}. This enables us to define a $\Q_p$-algebra homomorphism between Washnitzer algebras, 
\begin{align*}
  m^*: \Q_p\langle Z_1,\ldots,Z_d\rangle^\dagger &\longrightarrow  \Q_p\langle X_1,\ldots,X_d\rangle^\dagger \otimes_{\Q_p}^\dagger \Q_p\langle Y_1,\ldots,Y_d\rangle^\dagger=
  \Q_p\langle X_1,\ldots,X_d, Y_1,\ldots,Y_d\rangle^\dagger \\
Z_i &\longmapsto F_i.
\end{align*}
(Note that dagger algebras are weakly complete, cf. \cite[Sect.~1.9]{GK00}. Since the $F_i$ are power-bounded relative to the Gauss norm the homomorphism $Z_i \mapsto F_i$ extends from $\Q_p[Z_1,\ldots,Z_d]$ to $\Q_p\langle Z_1,\ldots,Z_d\rangle^\dagger$.)
Similar arguments give a map $i^*:\Q_p\langle Z_1,\ldots,Z_d\rangle^\dagger \longrightarrow \Q_p\langle Z_1,\ldots,Z_d\rangle^\dagger$ reflecting inversion in $G$. Finally, let
$e^*: \Q_p\langle Z_1,\ldots,Z_d\rangle^\dagger\longrightarrow \Q_p$ be the obvious augmentation map (evaluation at the origin). Since $(F_1,\ldots,F_d)$ is a formal group law, altogether this gives $\Sp(\Q_p\langle Z_1,\ldots,Z_d\rangle^\dagger)$ the structure of a dagger group; see (\ref{coalg}) above. 

The statements about uniqueness are proved word-for-word as in \cite{LS23} by appealing to Proposition \ref{schref} rather than \cite[Prop.~29.2]{Sch11}.
\end{proof}

\subsection{Overconvergent functions on a saturated group}\label{overfun}

We keep our saturated $p$-valued group $(G,\omega)$. For the most part we will suppress the $p$-valuation $\omega$ from the notation. Fix a finite field extension $K/\Q_p$
and consider the space $\CC(G,K)$ of all continuous functions $f:G \rightarrow K$. In this section we will define the subspace $\CC^\dagger(G,K)$ of {\it{overconvergent}} functions. 
This is parallel to the definition of the space of rigid functions $\CC^\text{rig}(G,K)$ in \cite[Df.~4.5]{LS23}.

Pick an ordered basis $(g_1,\ldots,g_d)$ for $(G,\omega)$. Then each power series
$$
\sum_{\alpha \in \N_0^d} c_\alpha Z^\alpha \in K\langle Z_1,\ldots,Z_d\rangle^\dagger
$$
gives a function $f(g)=\sum_{\alpha \in \N_0^d} c_\alpha x_1^{\alpha_1}\cdots x_d^{\alpha_d}$ where $g=g_1^{x_1}\cdots g_d^{x_d}$. The resulting map $\sum_{\alpha \in \N_0^d} c_\alpha Z^\alpha \mapsto f$ is injective by \cite[Cor.~5.8]{Sch11}, and we let $\CC^\dagger(G,K)$ be the image. Mimicking the discussion leading up to \cite[Df.~4.5]{LS23} one easily checks that $\CC^\dagger(G,K)$ is independent of the choice of basis. 

This justifies the following definition.

\begin{defn}\label{ocfun}
A function $f:G \rightarrow K$ is {\it{overconvergent}} if $f(g)=\sum_{\alpha \in \N_0^d} c_\alpha x_1^{\alpha_1}\cdots x_d^{\alpha_d}$ for all $g=g_1^{x_1}\cdots g_d^{x_d}$, where the coefficients $c_{\alpha}\in K$ satisfy $|c_{\alpha}|r^{|\alpha|}\longrightarrow 0$ as $|\alpha|\longrightarrow \infty$ for some $r>1$. (This notion is independent of the choice of basis
$(g_1,\ldots,g_d)$.)
\end{defn} 

The space $\CC^\dagger(G,K)$ of overconvergent functions carries a topology by identifying it with $K\langle Z_1,\ldots,Z_d\rangle^\dagger$ (with the direct limit topology). Again, this topology is
independent of the choice of basis since any algebra morphism between dagger algebras is automatically continuous (\cite[Lem.~1.8]{GK00}). 

\begin{rem}
For a fixed $r \in \Gamma_{>1}$ the subalgebra $T_{d,r}\subset K\langle Z_1,\ldots,Z_d\rangle^\dagger$ corresponds to a subalgebra of $\CC^\dagger(G,K)$ which {\it{does}}
depend on the choice of basis in general. The same with the norm $\|\cdot\|_r$. 
\end{rem}

\subsection{The $\dagger$-distribution algebra of a saturated group}\label{double}

Observe that $G\times G$ has a natural $p$-valuation given by $\omega_2(g,h)=\min\{\omega(g),\omega(h)\}$. An ordered basis for $(G\times G,\omega_2)$ is given by
$$
((g_1,e),\ldots, (g_d,e), (e,g_1),\ldots, (e,g_d))
$$ 
where $e\in G$ is the identity. (See \cite[p.~210]{Sch11}.) In particular $(G\times G,\omega_2)$ is saturated since $(G,\omega)$ is saturated. To see this use \cite[Prop.~26.11]{Sch11}.
Therefore the space $\CC^\dagger(G\times G,K)$ is defined, and there is a natural isomorphism of dagger algebras
\begin{align*}
\Psi: \CC^\dagger(G,K) \otimes_K^\dagger \CC^\dagger(G,K) & \overset{\sim}{\longrightarrow} \CC^\dagger(G\times G,K) \\
f_1 \otimes f_2 & \longmapsto \big[(g,h) \mapsto f_1(g)f_2(h)\big].
\end{align*}
This is immediate after choosing bases as above. 

The dagger group structure on $\Sp(\CC^\dagger(G,K))$ can now be described, without resorting to bases, by the three morphisms 

\begin{itemize}
\item $m^*: \CC^\dagger(G,K) \longrightarrow \CC^\dagger(G,K) \otimes_K^\dagger \CC^\dagger(G,K)$ such that $(\Psi\circ m^*)(f)(g,h)=f(gh)$;
\item $e^*:  \CC^\dagger(G,K) \longrightarrow K$ such that $e^*(f)=f(e)$;
\item $i^*: \CC^\dagger(G,K) \longrightarrow \CC^\dagger(G,K)$ such that $i^*(f)(g)=f(g^{-1})$.
\end{itemize}

As in Section \ref{dist} this gives an algebra structure on the strong dual.

\begin{defn}\label{dagdist}
The $\dagger$-distribution $K$-algebra of a saturated group $(G,\omega)$ is the strong dual 
$$
D^\dagger(G,K)=\CC^\dagger(G,K)_b'=\mathcal{L}(\CC^\dagger(G,K),K)_b.
$$
Multiplication in $D^\dagger(G,K)$ is the convolution product
\begin{align*}
\ast: D^\dagger(G,K) \times D^\dagger(G,K) &\longrightarrow D^\dagger(G,K) \\
(\delta_1,\delta_2) &\longmapsto \big[f \mapsto (\delta_1 \times \delta_2)(m^*(f))\big],
\end{align*}
where $(\delta_1 \times \delta_2)(f_1 \otimes f_2)=\delta_1(f_1)\delta_2(f_2)$. As noted in \ref{dist}, $\ast$ is jointly continuous. The two-sided unit element for $\ast$ is the Dirac delta 
distribution $e^*=\delta_e\in D^\dagger(G,K)$. 
\end{defn}

One of the goals of this article is to establish finer structural properties of these algebras. 

\section{Strict neighborhood groups}

\subsection{Proposition \ref{schref} revisited}

In this section we improve Proposition \ref{schref} as follows. 

\begin{prop}\label{strict}
Keep the notation from Proposition \ref{schref}, and assume $(G,\omega)$ is saturated. For every $j=1,\ldots,s$, let $\rho_j$ be a rational number in the range $0<\rho_j<\omega(h_j)-\frac{1}{p-1}$. Then $F_i$ converges on 
$$
\Bbb{B}(p^{\rho_1},\ldots,p^{\rho_s})=\{(y_1,\ldots,y_s)\in \Bbb{A}_{\text{rig}}^s,  \forall j: |y_j|\leq p^{\rho_j}\}.
$$
Moreover, on this polydisc $F_i$ has supremum at most $p^{\omega(g_i)-\frac{1}{p-1}}$.
\end{prop}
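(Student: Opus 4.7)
The strategy is to refine the computation in the proof of Proposition \ref{schref}, but now keeping careful track of the actual Gauss norm on the specified polydisc $\Bbb{B}(p^{\rho_1},\ldots,p^{\rho_s})$.

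First I would start from the Schneider-style expansion $F_i = \sum_\beta \frac{c_{\underline{i},\beta}}{\beta_1!\cdots\beta_s!} F_\beta$ already recalled in the proof of Proposition \ref{schref}, with $F_\beta(Y)=\prod_j Y_j(Y_j-1)\cdots(Y_j-\beta_j+1)$ and the key coefficient bound
$$v\bigl(c_{\underline{i},\beta}/\beta_1!\cdots\beta_s!\bigr) \ge -\omega(g_i) + \sum_{j=1}^s \beta_j\bigl(\omega(h_j) - \tfrac{1}{p-1}\bigr).$$
Since $\rho_j>0$ we have $|y_j-k|\le \max(|y_j|,|k|_p)\le p^{\rho_j}$ for every $k\in \N_0$, hence the Gauss norm $\|F_\beta\|_{(p^{\rho_1},\ldots,p^{\rho_s})}\le p^{\sum_j \rho_j \beta_j}$. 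Combining the two bounds, the $\beta$-th summand has norm at most $p^{\omega(g_i) - \sum_j \beta_j(\omega(h_j) - 1/(p-1) - \rho_j)}$. The strict hypothesis $\rho_j<\omega(h_j)-1/(p-1)$ makes each factor in the exponent strictly positive, so the term norms tend to zero as $|\beta|\to\infty$, establishing convergence on the polydisc.

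For the supremum estimate I would first observe that the identity of $G$ corresponds to $y=0$, which forces $F_i(0)=c_{\underline{i},0}=0$, so the series effectively runs over $|\beta|\ge 1$. To reach the asserted bound $p^{\omega(g_i)-1/(p-1)}$ I would bring in the group-theoretic input available on integer points: for $y\in \Z_p^s$, writing $h_1^{y_1}\cdots h_s^{y_s}=g_1^{F_1(y)}\cdots g_d^{F_d(y)}$ in $G$ and comparing $\omega$-values via the ordered-basis formula $\omega(\prod g_i^{x_i})=\min_i(\omega(g_i)+v(x_i))$ and $\omega(\prod h_j^{y_j})\ge \min_j(\omega(h_j)+v(y_j))>\frac{1}{p-1}$ (using that the non-emptiness of the $\rho_j$-range forces $\omega(h_j)>\frac{1}{p-1}$), one extracts the pointwise estimate $v(F_i(y))\ge \frac{1}{p-1}-\omega(g_i)$ uniformly on $\Z_p^s$.

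The main obstacle is to propagate this pointwise bound from integer points to the Gauss norm on the full rigid polydisc $\Bbb{B}(p^{\rho_1},\ldots,p^{\rho_s})$, since $\Z_p^s$ is not dense there once $\rho_j>0$. The plan is to interpolate between two complementary controls on the monomial coefficients $d_\alpha$ of $F_i=\sum_\alpha d_\alpha y^\alpha$: the $\Z_p^s$-pointwise estimate, applied through the maximum modulus principle on the unit polydisc, gives $|d_\alpha|\le p^{\omega(g_i)-1/(p-1)}$ uniformly in $\alpha$; simultaneously, the Schneider coefficient bound, transferred from the $c_{\underline{i},\beta}/\beta!$ to the $d_\alpha$ via the Stirling expansion of each $F_\beta$ into monomials, supplies the exponential decay $|d_\alpha|\le p^{\omega(g_i)-\sum_j \alpha_j(\omega(h_j)-1/(p-1))}$ needed to absorb the weighting $p^{\alpha\cdot\rho}$ for large $|\alpha|$. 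I expect that piecing these together, using the former for small $|\alpha|$ (where it is sharper) and the latter for large $|\alpha|$ (where the decay kicks in), yields the uniform Gauss-norm bound $p^{\omega(g_i)-1/(p-1)}$ on the full polydisc.
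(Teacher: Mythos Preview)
Your convergence argument is fine and matches the paper. The gap is in the supremum bound.

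First, the group-theoretic detour is a red herring: since $(G,\omega)$ is saturated you already have $d_\alpha\in\Z_p$, hence $|F_i(y)|\le 1$ on $\Z_p^s$, and as $\omega(g_i)>\frac{1}{p-1}$ this is already stronger than your pointwise bound $|F_i(y)|\le p^{\omega(g_i)-1/(p-1)}$. So the maximum-modulus step gives you nothing beyond $|d_\alpha|\le 1$.

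More seriously, the combination of $|d_\alpha|\le 1$ with your ``weak Schneider'' decay $|d_\alpha|\le p^{\omega(g_i)-\sum_j\alpha_j(\omega(h_j)-1/(p-1))}$ does \emph{not} yield the Gauss-norm bound on $\Bbb{B}(p^{\rho_1},\ldots,p^{\rho_s})$. Writing $\epsilon_j=\omega(h_j)-\frac{1}{p-1}-\rho_j>0$, you would need for every $\alpha$ that either $\sum_j\alpha_j\epsilon_j\ge\frac{1}{p-1}$ or $\sum_j\rho_j\alpha_j\le\omega(g_i)-\frac{1}{p-1}$. Take $\alpha=e_j$ with $\omega(h_j)>\omega(g_i)$ and $\rho_j$ chosen just below $\omega(h_j)-\frac{1}{p-1}$: then $\epsilon_j<\frac{1}{p-1}$ and $\rho_j>\omega(g_i)-\frac{1}{p-1}$, so both fail.

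What the paper does instead is sharpen the coefficient estimate itself to
\[
v(d_{i,\alpha})\ge -\Bigl(\omega(g_i)-\tfrac{1}{p-1}\Bigr)+\sum_{j=1}^s\alpha_j\Bigl(\omega(h_j)-\tfrac{1}{p-1}\Bigr)
\]
for all nonzero $\alpha$, by replacing the crude bound $v(\beta_j!)\le\beta_j/(p-1)$ with the exact formula $v(\beta_j!)=(\beta_j-s_p(\beta_j))/(p-1)$ and noting $\sum_j s_p(\beta_j)\ge 1$ for $\beta\ne 0$. That extra $\tfrac{1}{p-1}$ is exactly what your ``small $|\alpha|$'' regime is missing, and once you have it the single bound handles every nonzero $\alpha$ uniformly (the case $\alpha=0$ falls to $d_{i,0}\in\Z_p$).
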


\begin{proof}
In the proof of \ref{schref} we introduced the polynomials $F_\beta=\prod_{j=1}^s Y_j(Y_j-1)\cdots(Y_j-\beta_j+1)$, which we here expand as 
$F_\beta=\sum_{\alpha \in \N_0^s, \forall j: \alpha_j \leq \beta_j} a_{\beta,\alpha} Y^\alpha$. Clearly $a_{\beta,\alpha}\in \Z$, and 
the leading coefficient is $a_{\beta,\beta}=1$. Correspondingly, the power series $F_i$ introduced in (\ref{Fi}) has the expansion $F_i=\sum_{\alpha \in \N_0^s} d_{i,\alpha}Y^\alpha$, where 
$$
d_{i,\alpha}={\sum}_{\beta\in \N_0^s, \forall j: \alpha_j \leq \beta_j} \frac{c_{\underline{i},\beta}}{\beta_1!\cdots \beta_s!}a_{\beta,\alpha}.
$$
We know $d_{i,\alpha}\in \Z_p$ since $(G,\omega)$ is saturated. The fact that $F_i$ converges on $\Bbb{B}(p^{\rho_1},\ldots,p^{\rho_s})$ follows by tweaking the estimates in the proof of \ref{schref} slightly. Indeed, the inequality
$$
v\bigg(\frac{c_{\underline{i},\beta}}{\beta_1!\cdots \beta_s!}\bigg)-\sum_{j=1}^s\rho_j \beta_j\geq -\omega(g_i)+\sum_{j=1}^s \beta_j \bigg(\omega(h_j)-\frac{1}{p-1}-\rho_j \bigg)
$$ 
shows the left-hand side goes to infinity as $|\beta|\rightarrow \infty$. 

Our claim about the supremum follows immediately from the following bound on $v(d_{i,\alpha})$.
\begin{equation}\label{coeff}
v(d_{i,\alpha})\geq -\bigg(\omega(g_i)-\frac{1}{p-1}\bigg)+\sum_{j=1}^s \alpha_j \bigg(\omega(h_j)-\frac{1}{p-1}\bigg).
\end{equation}
To verify the bound (\ref{coeff}) we start with the very first inequality in \cite[p.~204]{Sch11}, which states that
$$
v(c_{\underline{i},\beta})\geq -\omega(g_i)+\sum_{j=1}^s \beta_j \omega(h_j).
$$
Using the elementary fact that $v(n!)=\frac{n-s_p(n)}{p-1}$ for all $n \in \N_0$, where $s_p(n)$ is the sum of the digits of $n$ is base $p$, we obtain the inequality
\begin{align*}
v\bigg(\frac{c_{\underline{i},\beta}}{\beta_1!\cdots \beta_s!}\bigg) &\geq -\omega(g_i)+\sum_{j=1}^s \beta_j \omega(h_j) -\sum_{j=1}^s v(\beta_j!) \\
&= -\omega(g_i)+\sum_{j=1}^s \beta_j \omega(h_j) -\sum_{j=1}^s \frac{\beta_j-s_p(\beta_j)}{p-1} \\
&= -\bigg(\omega(g_i)-\frac{1}{p-1}\bigg)+\sum_{j=1}^s \beta_j \bigg(\omega(h_j)-\frac{1}{p-1}\bigg) -\frac{1}{p-1}+\sum_{j=1}^s \frac{s_p(\beta_j)}{p-1} \\
&= -\bigg(\omega(g_i)-\frac{1}{p-1}\bigg)+\sum_{j=1}^s \beta_j \bigg(\omega(h_j)-\frac{1}{p-1}\bigg) +S(\beta),
\end{align*}
where we have introduced $S(\beta)=-\frac{1}{p-1}+\sum_{j=1}^s \frac{s_p(\beta_j)}{p-1}$ in the last step. We need only check (\ref{coeff}) for nonzero tuples $\alpha$
(it holds trivially for $\alpha=(0,\ldots,0)$ since the right-hand side is negative and $d_{i,\alpha} \in \Z_p$). So suppose $\alpha$ is nonzero. Then all $\beta$ contributing to $d_{i,\alpha}$ are nonzero as well, as $\alpha_j \leq \beta_j$ for all $j$. Now, clearly $S(\beta)\geq 0$ for all nonzero $\beta$. Continuing the calculation, we infer that for such $\beta$ we have 
\begin{align*}
v\bigg(\frac{c_{\underline{i},\beta}}{\beta_1!\cdots \beta_s!}\bigg) &\geq -\bigg(\omega(g_i)-\frac{1}{p-1}\bigg)+\sum_{j=1}^s \beta_j \bigg(\omega(h_j)-\frac{1}{p-1}\bigg) \\
&\geq -\bigg(\omega(g_i)-\frac{1}{p-1}\bigg)+\sum_{j=1}^s \alpha_j \bigg(\omega(h_j)-\frac{1}{p-1}\bigg).
\end{align*}
Since $v(a_{\beta,\alpha})\geq 0$ this proves (\ref{coeff}) and hence Proposition \ref{strict}. Indeed, by definition of $d_{i,\alpha}$, 
$$
v(d_{i,\alpha})\geq {\min}_{\beta\in \N_0^s, \forall j: \alpha_j \leq \beta_j} \bigg[ v\bigg(\frac{c_{\underline{i},\beta}}{\beta_1!\cdots \beta_s!}\bigg)+v(a_{\beta,\alpha}) \bigg].
$$
\end{proof}

Proposition \ref{strict} can be further improved in the following way. 

\begin{defn}
For $i=1,\ldots,d$ and $j=1,\ldots,s$ we introduce sequences in $\R_{>0}$ by
$$
\rho_{N,j}=\frac{1}{N+1}\bigg(\omega(h_j)-\frac{1}{p-1}\bigg), \y \y \y  \tau_{N,i}=\frac{1}{N+1}\bigg(\omega(g_i)-\frac{1}{p-1}\bigg),
$$
where $N$ runs over all positive integers. 
\end{defn}

We will typically assume $\omega$ is $\Q$-valued, in which case $(\rho_{N,j})_{N\geq 1}$ and $(\tau_{N,i})_{N \geq 1}$ are sequences in $\Q_{>0}$. 

\begin{rem}\label{rhotau}
Note that $\omega(h_j)-\frac{1}{p-1}-\rho_{N,j}=N\rho_{N,j}>0$ and $\omega(g_i)-\frac{1}{p-1}-\tau_{N,i}=N\tau_{N,i}>0$.
\end{rem}

This leads to one of our key observations:

\begin{thm}\label{polydisc}
Keep the notation from Propositions \ref{schref} and \ref{strict}. Assume $(G,\omega)$ is saturated and $\omega$ is $\Q$-valued. Let $N$ be a positive integer.
Then $F_i$ converges on 
$$
\Bbb{B}(p^{\rho_{N,1}},\ldots,p^{\rho_{N,s}})=\{(y_1,\ldots,y_s)\in \Bbb{A}_{\text{rig}}^s,  \forall j: |y_j|\leq p^{\rho_{N,j}}\}.
$$
Moreover, on this polydisc $F_i$ has supremum at most $p^{\tau_{N,i}}$.
\end{thm}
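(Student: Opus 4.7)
The plan is to refine the coefficient estimates from the proof of Proposition \ref{strict} by exploiting that the coefficients $d_{i,\alpha}$ lie in $\Z_p$ (by saturation). Convergence of $F_i$ on $\Bbb{B}(p^{\rho_{N,1}},\ldots,p^{\rho_{N,s}})$ is immediate from Proposition \ref{strict}, since $0<\rho_{N,j}<\omega(h_j)-\frac{1}{p-1}$ holds by construction; so only the sharper supremum bound requires new work.

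Writing $F_i=\sum_{\alpha\in\N_0^s} d_{i,\alpha} Y^\alpha$ as in the proof of Proposition \ref{strict}, the supremum of $F_i$ on the polydisc equals $\sup_\alpha p^{-v(d_{i,\alpha})+\sum_j \alpha_j \rho_{N,j}}$, so the claim reduces to the inequality
$$
v(d_{i,\alpha}) \;\geq\; \sum_{j=1}^s \alpha_j\rho_{N,j}-\tau_{N,i} \qquad \text{for every } \alpha\in\N_0^s.
$$
I would combine two lower bounds on $v(d_{i,\alpha})$. The first is $v(d_{i,\alpha})\geq 0$, since Remark \ref{sat} gives $d_{i,\alpha}\in\Z_p$. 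The second is the estimate (\ref{coeff}) from Proposition \ref{strict}, which upon substituting the identities $\omega(g_i)-\frac{1}{p-1}=(N+1)\tau_{N,i}$ and $\omega(h_j)-\frac{1}{p-1}=(N+1)\rho_{N,j}$ takes the form
$$
v(d_{i,\alpha}) \;\geq\; (N+1)\bigg(\sum_{j=1}^s \alpha_j\rho_{N,j}-\tau_{N,i}\bigg).
$$
These two bounds are complementary: integrality handles the regime $\sum_j\alpha_j\rho_{N,j}\leq\tau_{N,i}$ (where the target is non-positive), while (\ref{coeff}) dominates when $\sum_j\alpha_j\rho_{N,j}\geq\tau_{N,i}$ (there the factor $N+1\geq 1$ lies on the correct side). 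A short case split then yields the claim; equivalently, one can take a convex combination of the two bounds with weights $\tfrac{N}{N+1}$ and $\tfrac{1}{N+1}$ to obtain the desired inequality uniformly in $\alpha$.

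The principal obstacle is recognising that neither bound suffices alone. Estimate (\ref{coeff}) by itself would force $\sum_j\alpha_j\rho_{N,j}\geq\tau_{N,i}$ for every nonzero $\alpha$, which already fails for unit vectors $\alpha=e_j$ whenever $\omega(h_j)<\omega(g_i)$; conversely, integrality alone cannot control the tail. It is precisely the saturation hypothesis---through integrality of the $d_{i,\alpha}$---that fills the gap left by (\ref{coeff}) and yields the sharpened bound $p^{\tau_{N,i}}$.
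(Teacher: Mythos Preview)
Your proposal is correct and in fact cleaner than the paper's own argument. Both proofs rest on the same two inputs---integrality $v(d_{i,\alpha})\geq 0$ from saturation, and the rescaled estimate $v(d_{i,\alpha})\geq (N+1)\big(\sum_j\alpha_j\rho_{N,j}-\tau_{N,i}\big)$ from (\ref{coeff})---but they combine them differently. You observe that the target inequality $v(d_{i,\alpha})\geq T:=\sum_j\alpha_j\rho_{N,j}-\tau_{N,i}$ follows immediately: if $T\leq 0$ use integrality, if $T\geq 0$ use $(N+1)T\geq T$; or, as you note, take the convex combination with weights $\tfrac{N}{N+1},\tfrac{1}{N+1}$. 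The paper instead invokes a further consequence of saturation, namely $\omega(g_i)-\tfrac{1}{p-1}\leq 1$ (equivalently $\tau_{N,i}<\tfrac{1}{N}$), and runs a two-stage case split on whether $v(d_{i,\alpha})=0$ or $\geq 1$, and then on whether $N\sum_j\rho_{N,j}\alpha_j\geq 1$. Your route avoids this extra ingredient entirely and shows that the bound $p^{\tau_{N,i}}$ already follows from integrality plus (\ref{coeff}) alone, without using that the basis values $\omega(g_i)$ lie in the saturated range $(\tfrac{1}{p-1},\tfrac{p}{p-1}]$.
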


\begin{proof}
We fix an $N\geq 1$ and omit it from the notation in this proof. Thus $\rho_j=\rho_{N,j}$ and $\tau_i=\tau_{N,i}$, with $i$ and $j$ varying. 

We are assuming $(G,\omega)$ is saturated. In other words $\omega(g_i)-\frac{1}{p-1}\leq 1$ by \cite[Prop.~26.11]{Sch11}. Consequently 
$N\tau_i=\omega(g_i)-\frac{1}{p-1}-\tau_i <1$. That is, $\tau_i<\frac{1}{N}$. 

The convergence of $F_i$ on the polydisc is part of Proposition \ref{strict} (which applies to $\rho_j$ by Remark \ref{rhotau}). We need to check 
$|F_i(y_1,\ldots,y_s)|\leq p^{\tau_{i}}$ for all $(y_1,\ldots,y_s)$ in the polydisc. Adopting the notation from the proof of Proposition \ref{strict}, it suffices to check 
the coefficients $d_{i,\alpha}$ of $F_i$ satisfy 
\begin{equation}\label{lowbd}
v(d_{i,\alpha})-\sum_{j=1}^s \rho_j\alpha_j \geq -\tau_i
\end{equation}
for all $i$ and $\alpha \in \N_0^s$. Dividing (\ref{coeff}) by $N+1$ immediately gives the bound 
\begin{equation}\label{lowbdtwo}
\frac{1}{N+1}v(d_{i,\alpha})\geq -\tau_i+\sum_{j=1}^s \rho_j\alpha_j. 
\end{equation}
If $v(d_{i,\alpha})=0$ we are done, so we may assume $v(d_{i,\alpha})\geq 1$. Using (\ref{lowbdtwo}) and $\tau_i<\frac{1}{N}$ we infer that
\begin{align*}
v(d_{i,\alpha})-\sum_{j=1}^s \rho_j\alpha_j & \geq -\tau_i-N\tau_i+N \sum_{j=1}^s \rho_j\alpha_j \\
&>-\tau_i-1+N \sum_{j=1}^s \rho_j\alpha_j.
\end{align*}
If $-1+N \sum_{j=1}^s \rho_j\alpha_j\geq 0$ are are done; (\ref{lowbd}) follows. Otherwise $-1+N \sum_{j=1}^s \rho_j\alpha_j<0$, in which case we proceed as follows:
$$
v(d_{i,\alpha})\geq 1>N \sum_{j=1}^s \rho_j\alpha_j>-\tau_i+N \sum_{j=1}^s \rho_j\alpha_j\geq -\tau_i+\sum_{j=1}^s \rho_j\alpha_j,
$$
which is (\ref{lowbd}).
\end{proof}

The upshot of Theorem \ref{polydisc} is we have two decreasing sequences of rational numbers $\rho_{N,j}\rightarrow 0^+$ and $\tau_{N,i}\rightarrow 0^+$, 
for any two fixed $i$ and $j$, such that the power series $F_1,\ldots,F_d$ induce morphisms between polydiscs,
\begin{align*}
\Bbb{B}(p^{\rho_{N,1}},\ldots,p^{\rho_{N,s}}) &\longrightarrow \Bbb{B}(p^{\tau_{N,1}},\ldots,p^{\tau_{N,d}}) \\
(y_1,\ldots,y_s) & \longmapsto (F_1(y_1,\ldots,y_s),\ldots,F_d(y_1,\ldots,y_s)).
\end{align*}
We will use this to construct ambient rigid analytic groups of $\Bbb{G}$. 

\subsection{The existence of strict neighborhood groups}\label{exist}

The next application is our main motivation behind Theorem \ref{polydisc}.

\begin{cor}\label{groups}
Let $(G,\omega)$ be a saturated group with ordered basis $(g_1,\ldots,g_d)$. Assume\footnote{This is a mild assumption. By \cite[Cor.~33.3]{Sch11} any $p$-valuable group admits a 
$\Q$-valued $p$-valuation $\omega$.} $\omega$ takes values in $\Q$, and let $\tau_{N,i}=\frac{1}{N+1}\big(\omega(g_i)-\frac{1}{p-1}\big)$ as above. Then the polydisc $\Bbb{B}(p^{\tau_{N,1}},\ldots,p^{\tau_{N,d}})$ has a unique rigid analytic group structure which is compatible with the group structure on the dagger group $\Bbb{G}$. 
\end{cor}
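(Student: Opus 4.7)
The plan is to apply Theorem \ref{polydisc} twice---once for multiplication, once for inversion---to extend the dagger group law on $\Bbb{G}$ to an affinoid group law on $\Bbb{B}_N := \Bbb{B}(p^{\tau_{N,1}},\ldots,p^{\tau_{N,d}})$. Let $B_N$ denote the affinoid algebra of $\Bbb{B}_N$. Since each $\tau_{N,i} > 0$, an elementary estimate shows that $B_N \hookrightarrow \Q_p\langle Z_1,\ldots,Z_d\rangle^\dagger$ as $\Q_p$-algebras (any $f \in B_N$ is overconvergent for $r = \min_i p^{\tau_{N,i}} > 1$).

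First, specialize Theorem \ref{polydisc} to $s = 2d$ and the sequence $(h_1,\ldots,h_{2d}) = (g_1,\ldots,g_d,g_1,\ldots,g_d)$. The associated polyradii $\rho_{N,\bullet}$ are then the doubled tuple $(\tau_{N,1},\ldots,\tau_{N,d},\tau_{N,1},\ldots,\tau_{N,d})$, so the power series $F_1,\ldots,F_d \in \Q_p\langle X,Y\rangle^\dagger$ of Proposition \ref{LS23}---which encode multiplication via $m^*(Z_i) = F_i(X,Y)$---converge on $\Bbb{B}_N \times \Bbb{B}_N$ with supremum at most $p^{\tau_{N,i}}$. This is precisely the bound needed for $Z_i \mapsto F_i$ to extend to a continuous $\Q_p$-algebra homomorphism $m_N^* \colon B_N \to B_N \widehat{\otimes}_{\Q_p} B_N$. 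For inversion, I would apply Theorem \ref{polydisc} with $s = d$ and the reversed sequence $(g_d^{-1},\ldots,g_1^{-1})$; since $\omega(g_j^{-1}) = \omega(g_j)$, the polyradii $\rho_{N,\bullet}$ are a permutation of $(\tau_{N,1},\ldots,\tau_{N,d})$, and the theorem yields $d$ power series $I_i$ converging on $\Bbb{B}_N$ with supremum at most $p^{\tau_{N,i}}$, giving the coinversion $i_N^* \colon B_N \to B_N$. The counit is $e_N^*\colon B_N \to \Q_p$, evaluation at $0$.

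The group axioms for $(m_N^*, i_N^*, e_N^*)$ transfer from their dagger-level counterparts. Each axiom asserts the equality of two $\Q_p$-algebra maps built from the power series $F_i$ and $I_i$, and these equalities have already been verified in appropriate Washnitzer algebras $\Q_p\langle X_\bullet\rangle^\dagger$ during the proof of Proposition \ref{LS23}. Because the relevant $F_i$ and $I_i$ converge in the corresponding Tate algebras ($B_N$, $B_N \widehat{\otimes} B_N$, and $B_N \widehat{\otimes} B_N \widehat{\otimes} B_N$) by Theorem \ref{polydisc}, the identities remain valid after passage to these affinoid algebras via the natural ring maps. Uniqueness is then immediate: any rigid group law on $\Bbb{B}_N$ compatible with that on $\Bbb{G}$ must send each $Z_i$ to $F_i$ (respectively $I_i$), which determines $m_N^*$, $i_N^*$, and $e_N^*$ completely.

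The main technical obstacle---controlling the supremum of the $F_i$ on the target polydisc so that the multiplication genuinely lands in $\Bbb{B}_N$ rather than merely some larger polydisc---has already been overcome in Theorem \ref{polydisc}; the sharpness of the bound $p^{\tau_{N,i}}$ there is exactly what makes the construction self-contained. Everything else is careful bookkeeping of polyradii and the correct choice of sequences $(h_1,\ldots,h_s)$ to feed into that theorem.
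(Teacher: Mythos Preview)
Your proposal is correct and follows essentially the same approach as the paper: apply Theorem \ref{polydisc} with the doubled sequence $(g_1,\ldots,g_d,g_1,\ldots,g_d)$ to obtain multiplication, and with the reversed sequence $(g_d^{-1},\ldots,g_1^{-1})$ (using $\omega(g_j^{-1})=\omega(g_j)$) to obtain inversion, then invoke the formal group law identities already established for the $F_i$ to verify the group axioms. The paper makes the coordinate reversal for inversion slightly more explicit via the substitution $Y_j = X_{d+1-j}$, but this is exactly your ``permutation of $(\tau_{N,1},\ldots,\tau_{N,d})$'' observation.
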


\begin{proof}
We apply Theorem \ref{polydisc} in the setting of the proof of Proposition \ref{LS23}. Thus $F_1,\ldots,F_d$ are the power series in 
$\Q_p\langle X_1,\ldots,X_d,Y_1,\ldots,Y_d\rangle^\dagger$ such that
$$
(g_1^{x_1}\cdots g_d^{x_d})(g_1^{y_1}\cdots g_d^{y_d})=g_1^{F_1(x,y)}\cdots g_d^{F_d(x,y)}
$$
for all tuples $x=(x_1,\ldots,x_d)$ and $y=(y_1,\ldots,y_d)$ in $\Z_p^d$. In this situation the sequence of elements $h_1,\ldots,h_s$ in Proposition \ref{schref} is 
$g_1,\ldots,g_d,g_1,\ldots,g_d$. Moreover, $\tau_{N,i}=\frac{1}{N+1}\big(\omega(g_i)-\frac{1}{p-1}\big)$ and 
$$
\rho_{N,1}=\rho_{N,d+1}=\tau_{N,1}, \y \y \rho_{N,2}=\rho_{N,d+2}=\tau_{N,2}, \y \y \ldots \y \y , \rho_{N,d}=\rho_{N,2d}=\tau_{N,d}.
$$
Theorem \ref{polydisc} therefore gives a map
\begin{align*}
m: \Bbb{B}(p^{\tau_{N,1}},\ldots,p^{\tau_{N,d}}) \times_{\Sp(\Q_p)} \Bbb{B}(p^{\tau_{N,1}},\ldots,p^{\tau_{N,d}}) &\longrightarrow \Bbb{B}(p^{\tau_{N,1}},\ldots,p^{\tau_{N,d}}) \\
(x,y) & \longmapsto (F_1(x,y),\ldots,F_d(x,y)).
\end{align*}
We are building a group object in $\text{Rig}_{\Q_p}$. The unit $e: \Sp(\Q_p)\rightarrow  \Bbb{B}(p^{\tau_{N,1}},\ldots,p^{\tau_{N,d}})$ is given by the origin. 

The inversion map is the result of another application of Theorem \ref{polydisc}. This time we write
\begin{equation}\label{inv}
(g_1^{x_1}\cdots g_d^{x_d})^{-1}=(g_d^{-1})^{x_d}\cdots (g_1^{-1})^{x_1}=g_1^{I_1(x)}\cdots g_d^{I_d(x)}
\end{equation}
with $I_1,\ldots,I_d \in \Q_p\langle X_1,\ldots,X_d\rangle^\dagger$. We switch coordinates and introduce variables $Y_1=X_d,\ldots, Y_d=X_1$. Correspondingly, we let 
$J_i(Y_1,\ldots,Y_d)=I_i(Y_d,\ldots,Y_1)$. In this notation (\ref{inv}) becomes
$$
(g_d^{-1})^{y_1}\cdots (g_1^{-1})^{y_d}=g_1^{J_1(y)}\cdots g_d^{J_d(y)}.
$$ 
Now the elements $h_1,\ldots,h_s$ from Proposition \ref{schref}  are $g_d^{-1},\ldots,g_1^{-1}$. Since $\omega$ is invariant under inversion, see \cite[p.~169]{Sch11}, the associated sequence $(\rho_{N,j})$ in the current setup therefore amounts to
$$
\frac{1}{N+1}\big(\omega(g_d^{-1})-\frac{1}{p-1}\big)=\tau_{N,d}, \y \y \ldots \y \y, \frac{1}{N+1}\big(\omega(g_1^{-1})-\frac{1}{p-1}\big)=\tau_{N,1}.
$$
By Theorem \ref{polydisc} we know that
$$
|y_1|\leq p^{\tau_{N,d}},\ldots, |y_d|\leq p^{\tau_{N,1}} \Longrightarrow \forall i: |J_i(y_1,\ldots,y_d)| \leq p^{\tau_{N,i}}.
$$
Switching back to the original coordinates, and expressing this implication in terms of $x_1,\ldots,x_d$, gives a morphism between polydiscs 
\begin{align*}
i: \Bbb{B}(p^{\tau_{N,1}},\ldots,p^{\tau_{N,d}})  &\longrightarrow \Bbb{B}(p^{\tau_{N,1}},\ldots,p^{\tau_{N,d}}) \\
x & \longmapsto (I_1(x),\ldots,I_d(x)).
\end{align*}
Since $(F_1,\ldots,F_d)$ is a formal group law, the above morphisms $(m,e,i)$ endow $\Bbb{B}(p^{\tau_{N,1}},\ldots,p^{\tau_{N,d}})$ with the structure of a rigid analytic group over $\Q_p$. 
\end{proof}

\begin{defn}
We refer to $\Bbb{G}_N:=\Bbb{B}(p^{\tau_{N,1}},\ldots,p^{\tau_{N,d}})$ as the $N^\text{th}$ strict neighborhood group of $\Bbb{G}$.
\end{defn}

\section{Fr\'{e}chet algebras}

According to \cite[Df.~1.2.12]{Em17} a nuclear Fr\'{e}chet algebra over $\KK$ is a topological $\KK$-algebra $S$ for which there exists a projective system of $\KK$-Banach algebras
$S_1 \leftarrow S_2 \leftarrow \cdots$, with compact transition maps, and an isomorphism of topological $\KK$-algebras $S \overset{\sim}{\longrightarrow} \varprojlim_n S_n$.

\begin{thm}\label{frechet}
Let $(G,\omega)$ be a saturated group where $\omega$ is $\Q$-valued. Then $D^\dagger(G,K)$ is a nuclear Fr\'{e}chet algebra over $K$.
\end{thm}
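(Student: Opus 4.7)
The plan is to realize $D^\dagger(G,K)$ as a projective limit of the rigid distribution algebras $D^{\rig}(\Bbb{G}_N, K)$ attached to the strict neighborhood groups constructed in Corollary \ref{groups}, and to argue that this exhibits $D^\dagger(G,K)$ as a nuclear Fr\'echet algebra. The starting point is an identification of $\CC^\dagger(G,K)$ as a locally convex inductive limit. By choosing an ordered basis $(g_1,\ldots,g_d)$ we have $\CC^\dagger(G,K) \simeq K\langle Z_1,\ldots,Z_d\rangle^\dagger$, and the polyradii $(p^{\tau_{N,1}},\ldots,p^{\tau_{N,d}})$ are cofinal among polyradii $>1$ strictly shrinking to $1$. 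Hence the inclusions of poly-Tate algebras $\OO(\Bbb{G}_N) \hookrightarrow \CC^\dagger(G,K)$ induce a topological isomorphism $\CC^\dagger(G,K) \iso \varinjlim_N \OO(\Bbb{G}_N)$, and the transition maps $\OO(\Bbb{G}_N) \to \OO(\Bbb{G}_{N+1})$ are injective and compact (cf. \cite[Prop.~2.1.16]{Em17}, already invoked in Section \ref{dist}).

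Passing to strong duals, \cite[Prop.~16.10]{Sch02} yields a topological isomorphism of nuclear Fr\'echet spaces
$$
D^\dagger(G,K) \iso \varprojlim_N D^{\rig}(\Bbb{G}_N,K),
$$
and the dual transition maps $D^{\rig}(\Bbb{G}_{N+1},K) \to D^{\rig}(\Bbb{G}_N,K)$ are compact since duals of compact maps between Banach spaces are again compact. By Corollary \ref{groups}, each $\Bbb{G}_N$ is a rigid analytic group, so each $D^{\rig}(\Bbb{G}_N,K)$ is a Banach $K$-algebra under convolution (see \cite[Cor.~5.1.8]{Em17}). The natural inclusion $\Bbb{G}_{N+1} \hookrightarrow \Bbb{G}_N$ is a morphism of rigid groups (both structures come from the same formal group law $(F_1,\ldots,F_d)$ of Proposition \ref{LS23}), so its dual is a continuous algebra homomorphism. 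Thus $\varprojlim_N D^{\rig}(\Bbb{G}_N,K)$ is a nuclear Fr\'echet algebra in the sense of \cite[Df.~1.2.12]{Em17}.

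It remains to verify that the algebra structure on $D^\dagger(G,K)$ defined in \ref{dagdist} transports, under the above topological isomorphism, to the inverse-limit algebra structure. By unwinding the definitions, this boils down to the compatibility of comultiplications: the map $m^*: \CC^\dagger(G,K) \to \CC^\dagger(G,K)\otimes_K^\dagger \CC^\dagger(G,K)$ is the inductive limit of the rigid comultiplications $m_N^*: \OO(\Bbb{G}_N) \to \OO(\Bbb{G}_N)\widehat{\otimes}_K \OO(\Bbb{G}_N) = \OO(\Bbb{G}_N\times \Bbb{G}_N)$. Both maps are given by the same formal series $Z_i \mapsto F_i$, and the identification
$$
\CC^\dagger(G,K)\otimes_K^\dagger \CC^\dagger(G,K) \iso \varinjlim_N \OO(\Bbb{G}_N)\widehat{\otimes}_K \OO(\Bbb{G}_N)
$$
follows from the computation in Remark \ref{tens} applied to polyradii $(p^{\tau_{N,i}})$. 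Dualizing yields the compatibility of the convolution products, which completes the proof.

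The technical heart of the argument is the identification of the dagger completed tensor product $\CC^\dagger(G,K)\otimes_K^\dagger \CC^\dagger(G,K)$ with the inductive limit of the affinoid completed tensor products $\OO(\Bbb{G}_N\times \Bbb{G}_N)$; this is the only point where one cannot simply quote the Banach-space formalism, and it is the reason Remark \ref{tens} was established up front. Once this is in hand, verifying that the two algebra structures coincide is essentially tautological, because both convolutions arise from the formal group law underlying $\Bbb{G}$.
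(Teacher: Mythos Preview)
Your proof is correct and follows essentially the same route as the paper: identify $\CC^\dagger(G,K)$ with $\varinjlim_N \OO(\Bbb{G}_N)$ via cofinality of the polyradii $(p^{\tau_{N,i}})$, dualize using \cite[Prop.~16.10]{Sch02} to get $D^\dagger(G,K)\simeq\varprojlim_N D^{\rig}(\Bbb{G}_N,K)$ with compact transition maps, and then check the algebra structures match via the compatibility of comultiplications. The only cosmetic difference is that the paper proves its own Lemma~\ref{ban} for the Banach algebra structure on $(B_N)_b'$ rather than citing \cite[Cor.~5.1.8]{Em17}, and it spells out the cofinality via the sandwich $T_{d,r}\subset B_N\subset T_{d,p^{\min_i\tau_{N,i}}}$.
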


In preparation for the proof we introduce some notation. 

\begin{defn}
$B_N=\{\sum_{\alpha \in \N_0^d} c_{\alpha} Z^\alpha \in K[\![ Z_1,\ldots,Z_d]\!]: \lim_{|\alpha|\rightarrow \infty} |c_{\alpha}|p^{\sum_{i=1}^d \tau_{N,i}\alpha_i}=0\}$.
\end{defn}

By \cite[Thm.~6.1.5/4, p.~225]{BGR84} this is an affinoid algebra over $K$ with norm
$$
\| \sum_{\alpha \in \N_0^d} c_{\alpha} Z^\alpha \|_N={\sup}_{\alpha \in \N_0^d}  |c_{\alpha}|p^{\sum_{i=1}^d \tau_{N,i}\alpha_i}.
$$
The rigid analytic group structure on $\Bbb{G}_N \times_{\Sp(\Q_p)}\Sp(K)=\Sp(B_N)$ is given by the three morphisms 
$$
B_N \overset{m^*}{\longrightarrow} B_N \widehat{\otimes}_{K} B_N, \y \y \y B_N \overset{e^*}{\longrightarrow} \Q_p, \y \y \y B_N \overset{i^*}{\longrightarrow} B_N,
$$
which are restrictions of their counterparts on $K \langle Z_1,\ldots,Z_d\rangle^\dagger$ from the proof of Proposition \ref{LS23}.

\begin{proof}(Theorem \ref{frechet}.)
First note that $B_N \subset T_{d,p^\mu}$ for $\mu=\min_i \tau_{N,i}$, and $T_{d,r}\subset B_N$ for $N>\!\!>_r 0$. Both inclusions are continuous (indeed contractions).
Therefore $\varinjlim_N B_N=W_d=K \langle Z_1,\ldots,Z_d\rangle^\dagger$ as locally convex vector spaces, where we equip $W_d=\varinjlim_r T_{d,r}$ with the usual direct limit topology. The transition maps $B_1 \hookrightarrow B_2 \hookrightarrow \cdots$ are (injective and) compact by \cite[Prop.~2.1.16]{Em17}. By \cite[Prop.~16.10]{Sch02} we get a topological isomorphism after passing to strong duals,
\begin{equation}\label{wdual}
(W_d)_b'  \overset{\sim}{\longrightarrow} \varprojlim_N (B_N)_b'.
\end{equation}
The dual transition maps $(B_1)_b' \leftarrow (B_2)_b' \leftarrow \cdots$ remain compact by \cite[Lem.~16.4]{Sch02}, and each $(B_N)_b'$ is a Banach algebra by Lemma \ref{ban} below. What is left is to observe (\ref{wdual}) is an isomorphism of algebras. This follows because the identification $\varinjlim_N B_N=W_d$ preserves comultiplication:
$$
\begin{tikzcd}
B_N \arrow[r, "m^*"] \arrow[d] & B_N \widehat{\otimes}_{K}B_N \arrow[d] \\
W_d \arrow[r, "m^*"]& W_d \otimes_{K}^\dagger W_d
\end{tikzcd}
$$
commutes for all $N\geq 1$.
\end{proof}

The next result, used above, is presumably well-known. We include a proof, for lack of a reference. 

\begin{lem}\label{ban}
Let $B$ be an affinoid $\KK$-algebra, and suppose $\Sp(B)$ has a rigid analytic group structure. Then the distribution algebra $B_b'$ (which is $\mathcal{L}(B,\KK)$ with the operator norm topology by \cite[Rem.~6.7]{Sch02}) is a $\KK$-Banach algebra: There is a constant $c>0$ such that $\|\delta_1 \ast \delta_2\|\leq c\|\delta_1\|\cdot \|\delta_2\|$ for all $\delta_1,\delta_2 \in B_b'$.
\end{lem}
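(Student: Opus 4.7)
The plan is to exploit two facts: (i) the comultiplication $m^\ast\colon B \to B \widehat{\otimes}_K B$ is a continuous $K$-algebra morphism between Banach $K$-algebras, hence bounded; (ii) the bilinear pairing $\delta_1,\delta_2\mapsto \delta_1\hat\otimes\delta_2$ extends to a continuous linear form on $B\widehat{\otimes}_K B$ whose operator norm is controlled by $\|\delta_1\|\cdot\|\delta_2\|$. Combining these through the definition of convolution gives the required submultiplicativity, with the constant $c$ coming from step (i).

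In more detail, first I would fix Banach norms. By choice of a presentation $B=T_n/I$, $B$ inherits a residue norm $\|\cdot\|$ turning it into a Banach $K$-algebra, and likewise $B\widehat{\otimes}_K B$ is an affinoid $K$-algebra with its own Banach norm $\|\cdot\|_{\otimes}$ (equivalently, the completed projective tensor product norm, cf.\ \cite[Prop.~17.4]{Sch02}). By the comment just after (\ref{coalg}) (or the general fact that any $K$-algebra morphism between affinoid algebras is continuous), $m^\ast$ is continuous, so there exists $c>0$ with
\[
\|m^\ast(f)\|_{\otimes}\leq c\,\|f\|\qquad\text{for all } f\in B.
\]

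Next I would observe that for any $\delta_1,\delta_2\in B_b'$, the universal property of the projective tensor product produces a continuous linear form $\delta_1\hat\otimes\delta_2\colon B\widehat{\otimes}_K B\to K$ satisfying $(\delta_1\hat\otimes\delta_2)(g\otimes h)=\delta_1(g)\delta_2(h)$ and with operator norm bounded by $\|\delta_1\|\cdot\|\delta_2\|$ (this is the non-archimedean cross-norm inequality, cf.\ \cite[Prop.~17.4]{Sch02}). Then, directly from Definition \ref{dagdist},
\[
|(\delta_1 \ast \delta_2)(f)| \;=\; \bigl|(\delta_1\hat\otimes\delta_2)(m^\ast(f))\bigr| \;\leq\; \|\delta_1\|\cdot\|\delta_2\|\cdot\|m^\ast(f)\|_{\otimes}\;\leq\;c\,\|\delta_1\|\cdot\|\delta_2\|\cdot\|f\|.
\]
Taking the supremum over $f$ with $\|f\|\leq 1$ yields $\|\delta_1\ast \delta_2\|\leq c\,\|\delta_1\|\cdot\|\delta_2\|$, as desired.

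The only genuine subtlety is step (ii): one must make sure the tensor norm used to define $\|m^\ast(f)\|_{\otimes}$ is compatible with the bilinear pairing in the sense that $\|\delta_1\hat\otimes\delta_2\|\leq\|\delta_1\|\cdot\|\delta_2\|$. For the projective tensor product norm this is essentially the definition, and the affinoid tensor norm on $B\widehat{\otimes}_K B$ is equivalent (in fact equal, up to the usual identifications) to it, so the inequality holds after possibly enlarging $c$. This is the main point one has to be careful about; the rest is a one-line computation from the definitions.
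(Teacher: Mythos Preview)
Your proof is correct and follows essentially the same approach as the paper: both bound $\|\delta_1 \hat{\otimes} \delta_2\|$ (the paper writes $\delta_1\times\delta_2$) by $\|\delta_1\|\cdot\|\delta_2\|$ via the projective tensor norm, then use the boundedness of $m^\ast$ to conclude with $c=\|m^\ast\|$. The only cosmetic difference is that the paper spells out the inequality $\|\delta_1\times\delta_2\|\leq\|\delta_1\|\cdot\|\delta_2\|$ by taking the infimum over expansions $\varphi=\sum_i f_{1,i}\otimes f_{2,i}$, whereas you invoke the cross-norm property of the projective tensor product directly.
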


\begin{proof}
Let $B \overset{m^*}{\longrightarrow} B \widehat{\otimes}_{\KK} B$ be the morphism defining multiplication on $\Sp(B)$. Then, by definition of the convolution $\ast$ we have 
$$
(\delta_1\ast \delta_2)(f)=(\delta_1 \times \delta_2)(m^*(f))
$$ 
for $f \in B$, where $(\delta_1\times \delta_2)(f_1\otimes f_2)=\delta_1(f_1)\delta_2(f_2)$. Expanding $\varphi \in B \otimes_{\KK} B$ as $\varphi=\sum_i f_{1,i}\otimes f_{2,i}$ we find that
$$
|(\delta_1\times \delta_2)(\varphi)|\leq \max_i \{ |\delta_1(f_{1,i})|\cdot  |\delta_2(f_{2,i})|\}\leq \|\delta_1\| \cdot \|\delta_2\| \cdot 
\max_i \{ \|f_{1,i}\|\cdot  \|f_{2,i}\| \}
$$
Taking the infimum over all expansions $\varphi=\sum_i f_{1,i}\otimes f_{2,i}$ this shows $\|\delta_1 \times \delta_2\|\leq \|\delta_1\| \cdot \|\delta_2\|$. (See the definition of the tensor product norm on $B \otimes_{\KK} B$ in \cite[Sect.~2.1.7]{BGR84} or \cite[Sect.~17, Part B]{Sch02}.) Consequently,
$$
|(\delta_1\ast \delta_2)(f)|\leq \|\delta_1 \times \delta_2\| \cdot \|m^*\|\cdot \|f\| \leq  \|m^*\|\cdot \|\delta_1\| \cdot \|\delta_2\|\cdot \|f\|.
$$
In other words, we may take $c=\|m^*\|$. 
\end{proof}

\begin{rem}
If the affinoid algebra $B$ in Lemma \ref{ban} is a Tate algebra, then the operator norm on $B_b'$ is submultiplicative ($c=1$). Indeed, every $\KK$-algebra morphism between Tate algebras is a contraction by \cite[Thm.~5.1.3/4, p.~194]{BGR84}, and there are canonical isometric isomorphisms $T_m \widehat{\otimes}_\KK T_n \overset{\sim}{\longrightarrow}
T_{m+n}$ by \cite[Cor.~6.1.1/8, p.~224]{BGR84}. Hence the comultiplication $m^*$ from the proof of \ref{ban} automatically has operator norm at most one.  
\end{rem}

\section{The relation to \cite{ST03}}

\subsection{An exercise in $p$-adic analysis}

For the reader's convenience, we include a proof of the following basic result from $p$-adic analysis. This is based on the proofs of \cite[Thm.~4.4]{KCon} and \cite[Thm.~54.4]{S84}.
For $\alpha \in \N_0^d$ and $x=(x_1,\ldots,x_d)\in \Z_p^d$ we let $\binom{x}{\alpha}=\binom{x_1}{\alpha_1}\cdots\binom{x_d}{\alpha_d}$. Also, we let $\alpha!=\alpha_1!\cdots \alpha_d!$.

\begin{lem}\label{mahler}
Let $f\in \mathcal{C}(\Z_p^d,K)$ be a continuous function with Mahler expansion $f(x)=\sum_{\alpha\in \N_0^d} m_\alpha \binom{x}{\alpha}$, with coefficients $m_\alpha \in K$. 
Let $r_1,\ldots,r_d \in p^\Q$ be numbers $>1$. Then $f$ extends (uniquely) to a rigid analytic function on the polydisc
$$
\Bbb{B}(r_1,\ldots,r_d)=\{(x_1,\ldots,x_d) \in \Bbb{A}_\text{rig}^d, \forall i: |x_i|\leq r_i\}
$$
if and only if $\lim_{|\alpha|\rightarrow \infty}\frac{|m_\alpha|}{|\alpha!|}r^{|\alpha|}=0$.
(Here we adopt the notation $r^{|\alpha|}=r_1^{\alpha_1}\cdots r_d^{\alpha_d}$.)

Furthermore, if $f(x)=\sum_{\beta \in \N_0^d} c_\beta x^\beta$ is the Taylor expansion of such an $f$, then
$$
\|f\|_r:={\sup}_{\beta\in \N_0^d} |c_\beta|r^{|\beta|}={\sup}_{\alpha \in \N_0^d}\frac{|m_\alpha|}{|\alpha!|}r^{|\alpha|}.
$$
\end{lem}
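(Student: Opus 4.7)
The plan is to pass between the Mahler and Taylor expansions of $f$ using Stirling numbers, exploiting the fact that both kinds of Stirling numbers are rational integers and therefore of $p$-adic absolute value at most one. The key preliminary computation is the Gauss norm of a single binomial. Expanding $\binom{x_i}{\alpha_i}\alpha_i! = x_i(x_i-1)\cdots(x_i-\alpha_i+1) = \sum_{k=0}^{\alpha_i} s(\alpha_i,k) x_i^k$ with Stirling numbers of the first kind $s(\alpha_i,k)\in\Z$, one finds
\[
\binom{x}{\alpha} = \frac{1}{\alpha!}\sum_{\beta\leq\alpha} \Big(\prod_{i=1}^d s(\alpha_i,\beta_i)\Big)\, x^\beta.
\]
The hypothesis $r_i>1$ forces $r^{|\beta|}\leq r^{|\alpha|}$ whenever $\beta\leq\alpha$, and since the $\beta=\alpha$ term has coefficient $1/\alpha!$, one obtains $\|\binom{x}{\alpha}\|_r = |\alpha!|^{-1} r^{|\alpha|}$.

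For the ``if'' direction, assuming $|m_\alpha|/|\alpha!|\cdot r^{|\alpha|}\to 0$, the series $\sum_\alpha m_\alpha\binom{x}{\alpha}$ converges in the affinoid algebra of the polydisc $\Bbb{B}(r_1,\ldots,r_d)$ by the norm computation above; its restriction to $\Z_p^d$ coincides with $f$ by Mahler's theorem, providing the (unique) rigid extension, and reading off the Gauss norm of a convergent series gives $\|f\|_r \leq \sup_\alpha |m_\alpha|/|\alpha!|\cdot r^{|\alpha|}$.

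For the ``only if'' direction, assume $f$ extends to the polydisc with Taylor expansion $f=\sum_\beta c_\beta x^\beta$ and $|c_\beta| r^{|\beta|}\to 0$. I would invert the previous expansion using Stirling numbers of the second kind, $x_i^{\beta_i}=\sum_{k=0}^{\beta_i} S(\beta_i,k) k!\binom{x_i}{k}$ with $S(\beta_i,k)\in\Z$; multiplying out and identifying the Mahler coefficient yields
\[
m_\alpha = \alpha!\sum_{\beta\geq\alpha} c_\beta \prod_{i=1}^d S(\beta_i,\alpha_i).
\]
Using $|S(\beta_i,\alpha_i)|\leq 1$ and $r^{|\alpha|}\leq r^{|\beta|}$ for $\alpha\leq\beta$ one obtains $|m_\alpha|/|\alpha!|\cdot r^{|\alpha|}\leq \sup_{\beta\geq\alpha} |c_\beta| r^{|\beta|}$, which tends to $0$ as $|\alpha|\to\infty$ and also gives $\sup_\alpha |m_\alpha|/|\alpha!|\cdot r^{|\alpha|}\leq \sup_\beta |c_\beta| r^{|\beta|}$. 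Combining with the reverse inequality from the previous paragraph shows the two suprema coincide. The main obstacle is not conceptual but combinatorial: one must justify termwise rearrangement of the double sums when translating between the two bases, and keep careful track of the opposite multi-index orderings $\beta\leq\alpha$ versus $\beta\geq\alpha$ in the two expansions. The hypothesis $r_i>1$ is indispensable to ensure monotonicity $r^{|\beta|}\leq r^{|\alpha|}$ on both sides.
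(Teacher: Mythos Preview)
Your proposal is correct and follows essentially the same route as the paper's proof: both directions are handled by passing between the monomial basis and the binomial (falling-power) basis via Stirling numbers of the first and second kind, using only their integrality together with the monotonicity $r^{|\beta|}\leq r^{|\alpha|}$ for $\beta\leq\alpha$ afforded by $r_i>1$. The only cosmetic difference is that you package the ``if'' direction as a preliminary computation of $\|\binom{x}{\alpha}\|_r$ followed by summation in the Banach algebra, whereas the paper writes out the double sum and interchanges explicitly; your acknowledged obligation to justify the rearrangement in the ``only if'' direction is exactly what the paper carries out.
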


\begin{proof}
We will give the proof for $d=1$, and ask the reader to make the necessary modifications in the higher-dimensional case. Thus $f \in \mathcal{C}(\Z_p,K)$ and $r \in p^{\Q}$ is $>1$. 
Moreover, $\alpha \in \N_0$. 

First assume $\frac{|m_\alpha|}{|\alpha!|}r^{\alpha}\rightarrow 0$ as $\alpha \rightarrow \infty$. Expand the falling powers $x^{\underline{\alpha}}:=x(x-1)\cdots(x-\alpha+1)$ as
$\Z$-linear combinations of monomials $x^{\underline{\alpha}}=\sum_{\beta\leq \alpha} a_{\alpha,\beta}x^\beta$, where $a_{\alpha,\alpha}=1$. Then for $x \in \Z_p$ we have 
$$
f(x)=\sum_{\alpha} \frac{m_\alpha}{\alpha!}x^{\underline{\alpha}}=\sum_\alpha \sum_{\beta\leq \alpha} \frac{m_\alpha}{\alpha!}a_{\alpha,\beta} x^\beta.
$$
The double sum is absolutely convergent for $|x|\leq r$ since its terms go to zero as $\alpha+\beta\rightarrow \infty$:
$$
|\frac{m_\alpha}{\alpha!}a_{\alpha,\beta} x^\beta|\leq \frac{|m_\alpha|}{|\alpha!|}r^\beta\leq  \frac{|m_\alpha|}{|\alpha!|}r^\alpha.
$$
We may therefore interchange the order of summation, and hence $f(x)=\sum_\beta \big(\sum_{\alpha\geq \beta} \frac{m_\alpha}{\alpha!}a_{\alpha,\beta} \big)x^\beta$ provided $|x|\leq r$. This extends $f$ to a rigid analytic function on $\Bbb{B}(r)$.

On the other hand, now suppose $f(x)=\sum_\beta c_\beta x^\beta$ for $|x|\leq r$ and certain $c_\beta\in K$. We now express the monomials $x^\beta$ as $\Z$-linear combinations of falling powers, $x^\beta=\sum_{\alpha\leq \beta} s_{\beta,\alpha}x^{\underline{\alpha}}$. (The coefficients $s_{\beta,\alpha}$ are known as Stirling numbers of the second kind.)   
With this notation we have 
$$
f(x)=\sum_{\beta} \sum_{\alpha \leq \beta} c_{\beta} s_{\beta,\alpha}x^{\underline{\alpha}}=\sum_{\beta} \sum_{\alpha \leq \beta} c_{\beta} s_{\beta,\alpha}\alpha!\binom{x}{\alpha}.
$$
Again, the double sum is absolutely convergent, at least for $x \in \Z_p$, since $|c_{\beta} s_{\beta,\alpha}\alpha!\binom{x}{\alpha}|\leq |c_\beta|$ for such $x$ and 
certainly $c_\beta \rightarrow 0$ for $\beta \rightarrow \infty$. (Even $|c_\beta|r^\beta$ is a null sequence.). Swapping the order of summation we find that $f$ has Mahler coefficients
$$
m_\alpha=\sum_{\beta\geq \alpha} c_\beta s_{\beta,\alpha}\alpha!.
$$
To finish the proof of the only if part, we simply check that
$$
\frac{|m_\alpha|}{|\alpha!|}r^{\alpha}\leq \max_{\beta\geq \alpha}\{|c_\beta|r^\alpha\}\leq \max_{\beta\geq \alpha}\{|c_\beta|r^\beta\}.
$$
We are now done since  $|c_\beta|r^\beta$ is a null sequence by hypothesis. We conclude $\frac{|m_\alpha|}{|\alpha!|}r^{\alpha}$ is a null sequence.

The last claim about $\|f\|_r$ follows straightforwardly from the formulas $m_\alpha=\sum_{\beta\geq \alpha} c_\beta s_{\beta,\alpha}\alpha!$ and $c_\beta=\sum_{\alpha\geq \beta} \frac{m_\alpha}{\alpha!}a_{\alpha,\beta}$ derived above. For example, for all $\beta\in \N_0^d$ we have
$$
|c_\beta|r^{|\beta|}\leq \max_{\alpha \geq \beta} \frac{|m_\alpha|}{|\alpha!|}r^{|\beta|} \leq \max_{\alpha \geq \beta} \frac{|m_\alpha|}{|\alpha!|}r^{|\alpha|} \leq 
{\max}_{\alpha \in \N_0^d}\frac{|m_\alpha|}{|\alpha!|}r^{|\alpha|}.
$$
Taking the supremum over $\beta$ shows $\|f\|_r$ is $\leq$ the right-hand side. A similar argument for $m_{\alpha}$ gives the other inequality. 
\end{proof}

\begin{rem}\label{radius}
Let $\theta:=p^{-1/(p-1)}$ be the radius of convergence of the $p$-adic exponential function. Note that $\theta \in p^\Q$ and $\frac{1}{p}\leq \theta<1$. Keep the setup of Lemma \ref{mahler}, but 
assume $d=1$ for simplicity. Then:
$$
\text{$\lim_{\alpha \rightarrow\infty}|m_\alpha|s^\alpha=0$ for some $s>\theta^{-1}$ $\Longrightarrow$}
$$
$$
\text{$\lim_{\alpha \rightarrow \infty} \frac{|m_\alpha|}{|\alpha!|}r^{\alpha}=0$ for some $r>1$ $\Longrightarrow$}
$$
$$
\text{$\lim_{\alpha \rightarrow\infty}|m_\alpha|s^\alpha=0$ for $s=\theta^{-1}$.}
$$
This is easily checked, by using the asymptotic formula $v(\alpha!)=\frac{\alpha}{p-1}+O(\log_p\alpha)$ as $\alpha \rightarrow \infty$. Recall that, by Amice's theorem, $f$ is {\it{locally}} analytic on $\Z_p$ if and only if $\lim_{\alpha \rightarrow\infty}|m_\alpha|s^\alpha=0$ for some $s>1$.
\end{rem}


\subsection{Distributions on polydiscs}

We keep the setup from Lemma \ref{mahler} and $\mathcal{C}^\text{rig}(\Bbb{B}(r_1,\ldots,r_d),K)$ denotes the space of rigid functions on the polydisc of polyradius 
$(r_1,\ldots,r_d)$. Here we will give a description of the strong dual $\mathcal{C}^\text{rig}(\Bbb{B}(r_1,\ldots,r_d),K)_b'$ (the space of continuous linear forms with the operator norm topology). 

First, for each $\alpha \in \N_0^d$ we introduce the linear form ${\mm}_{\alpha}(f)=m_\alpha$. This is clearly continuous, and it has operator norm 
$\|{\mm}_{\alpha}\|\leq |\alpha!|r^{-|\alpha|}$. All the distributions on $\Bbb{B}(r_1,\ldots,r_d)$ are suitable linear combinations of $\{{\mm}_{\alpha}\}_{\alpha \in \N_0^d}$.

\begin{lem}\label{distpoly}
Every $\delta \in \mathcal{C}^\text{rig}(\Bbb{B}(r_1,\ldots,r_d),K)'$ has a unique expansion as a weakly convergent infinite series $\delta=\sum_{\alpha \in \N_0^d}d_\alpha {\mm}_{\alpha}$ where the coefficients $d_\alpha \in K$ satisfy the following boundedness condition:
$$
\|\delta\|={\sup}_{\alpha \in \N_0^d}|\alpha!d_\alpha|r^{-|\alpha|}<\infty.
$$
\end{lem}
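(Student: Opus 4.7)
The plan is to use the Mahler basis $\{\binom{x}{\alpha}\}_{\alpha \in \N_0^d}$ of $\mathcal{C}^{\text{rig}}(\Bbb{B}(r_1,\ldots,r_d),K)$ provided by Lemma \ref{mahler} and read off the coefficients $d_\alpha$ by pairing $\delta$ with these basis elements.

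First, I would observe from Lemma \ref{mahler} that $\bigl\|\binom{x}{\alpha}\bigr\|_r = r^{|\alpha|}/|\alpha!|$, since this Mahler series has only one nonzero term. This immediately forces uniqueness of any expansion $\delta = \sum_\alpha d_\alpha \mm_\alpha$: evaluating both sides at $\binom{x}{\beta}$ gives $d_\beta = \delta\bigl(\binom{x}{\beta}\bigr)$. So \emph{define} $d_\alpha := \delta\bigl(\binom{x}{\alpha}\bigr)$. Continuity of $\delta$ with respect to the norm $\|\cdot\|_r$ then yields the estimate
\[
|d_\alpha| \;\leq\; \|\delta\|\cdot \bigl\|\tbinom{x}{\alpha}\bigr\|_r \;=\; \|\delta\|\cdot \tfrac{r^{|\alpha|}}{|\alpha!|},
\]
which gives $\sup_\alpha |\alpha!\,d_\alpha|\,r^{-|\alpha|}\leq \|\delta\|<\infty$, establishing the boundedness condition and one half of the norm identity.

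Next, to see the weak convergence $\delta = \sum_\alpha d_\alpha \mm_\alpha$, I would note that the Mahler expansion $f = \sum_\alpha m_\alpha \binom{x}{\alpha}$ of an element $f \in \mathcal{C}^{\text{rig}}(\Bbb{B}(r_1,\ldots,r_d),K)$ converges in the norm $\|\cdot\|_r$, because the $\|\cdot\|_r$-norm of the tail $\sum_{|\alpha|>N} m_\alpha \binom{x}{\alpha}$ is $\sup_{|\alpha|>N} |m_\alpha|\,r^{|\alpha|}/|\alpha!|$, which tends to zero by Lemma \ref{mahler}. Continuity of $\delta$ then yields
\[
\delta(f) \;=\; {\sum}_\alpha m_\alpha\, \delta\bigl(\tbinom{x}{\alpha}\bigr) \;=\; {\sum}_\alpha d_\alpha\, m_\alpha \;=\; {\sum}_\alpha d_\alpha\, \mm_\alpha(f),
\]
proving the desired expansion, together with the fact that the series converges in $K$ for every $f$ (i.e.\ the series is weakly convergent).

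Finally, for the reverse norm inequality, given $f$ with $\|f\|_r = \sup_\alpha |m_\alpha|\,r^{|\alpha|}/|\alpha!|$, a direct estimate gives
\[
|\delta(f)| \;\leq\; {\sup}_\alpha |d_\alpha|\,|m_\alpha| \;=\; {\sup}_\alpha \Bigl(|\alpha!\,d_\alpha|\,r^{-|\alpha|}\Bigr)\cdot \Bigl(\tfrac{|m_\alpha|}{|\alpha!|}\,r^{|\alpha|}\Bigr) \;\leq\; \Bigl({\sup}_\alpha |\alpha!\,d_\alpha|\,r^{-|\alpha|}\Bigr)\cdot \|f\|_r,
\]
so $\|\delta\|\leq \sup_\alpha |\alpha!\,d_\alpha|\,r^{-|\alpha|}$, completing the norm equality. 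There is no real obstacle here; the main technical point is the observation that Mahler series converge in the $\|\cdot\|_r$-norm (not just pointwise on $\Z_p^d$), which is what allows continuity of $\delta$ to be applied termwise.
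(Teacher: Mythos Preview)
Your proof is correct and follows essentially the same route as the paper: the paper simply invokes the standard isometric duality $c_0(\N_0^d)'\simeq \ell^\infty(\N_0^d)$ (after using Lemma~\ref{mahler} to identify $\CC^{\text{rig}}(\Bbb{B}(r_1,\ldots,r_d),K)$ with a reparametrized $c_0$), whereas you unpack that duality by hand. The key input in both cases is that the Mahler expansion converges in the $\|\cdot\|_r$-norm, which you correctly isolate.
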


\begin{proof}
This follows straightforwardly from the duality between null sequences and bounded sequences. See the example in \cite[p.~12]{Sch02} for example, which gives an isometry
$c_0(\N_0^d) \overset{\sim}{\longrightarrow} \ell^\infty(\N_0^d)$. With Lemma \ref{mahler} in hand, a simple reparametrization of the coefficients gives the result.
\end{proof}

We will mainly be interested in the polyradii $(p^{\tau_{N,1}},\ldots,p^{\tau_{N,d}})$ as $N \rightarrow \infty$. In this case 
$$
(B_N)_b'=\mathcal{C}^\text{rig}(\Bbb{B}(p^{\tau_{N,1}},\ldots,p^{\tau_{N,d}}),K)_b'
$$
is the distribution {\it{algebra}} appearing in the description of $D^\dagger(G,K)$ as a Fr\'{e}chet algebra. 

\subsection{Contact with the Schneider-Teitelbaum algebras}

We return to our saturated group $(G,\omega)$ with a choice of ordered basis $(g_1,\ldots,g_d)$. In particular this gives a homeomorphism
\begin{align*}
\psi: \Z_p^d &\overset{\sim}{\longrightarrow} G \\
x=(x_1,\ldots,x_d) & \longmapsto g_1^{x_1}\cdots g_d^{x_d}.
\end{align*}
Via $\psi$ we view functions on $G$ as functions on $\Z_p^d$, and vice versa. More formally, we have an isomorphism of $K$-Banach spaces
\begin{align*}
\psi^*: \mathcal{C}(G,K) &\overset{\sim}{\longrightarrow}  \mathcal{C}(\Z_p^d,K) \\
f &\longmapsto f\circ \psi.
\end{align*}
Similarly for the space $\mathcal{C}^\text{an}(G,K)$ of locally analytic functions. 

Following \cite[Sect.~4]{ST03} we embed $K[\![G]\!]:=K \otimes_{\Q_p} \Z_p[\![G]\!]$ in the locally analytic distribution algebra $D(G,K)$ by sending $g \mapsto \delta_g$. We introduce the elements $b_i=g_i-1 \in \Z_p[\![G]\!]$, and for each $\alpha \in \N_0^d$ we let ${\bb}^\alpha=b_1^{\alpha_1}\cdots b_d^{\alpha_d}$. Elements of $K[\![G]\!]$ have the form 
$\sum_{\alpha \in \N_0^d} d_\alpha {\bb}^\alpha$ with bounded coefficients $d_\alpha \in K$. Similarly, a distribution $\lambda \in D(G,K)$ has a unique expansion
$$
\lambda=\sum_{\alpha \in \N_0^d} d_\alpha {\bb}^\alpha, \y \y \y \|\lambda\|_s'={\sup}_{\alpha \in \N_0^d} |d_\alpha|s^{|\alpha|}<\infty, \y \y \y \forall s\in (0,1).
$$
The sum converges in $D(G,K)$ essentially by (the proof of) \cite[Lem.~2.5]{ST02}. Note that ${\bb}^\alpha(f)=m_\alpha$ is the $\alpha^\text{th}$ Mahler coefficient of $f$. We recall from \cite[Prop.~4.2]{ST03} that the norm 
$$
\|\lambda\|_s={\sup}_{\alpha \in \N_0^d} |d_\alpha|s^{\tau\alpha}, \y \y \y \tau\alpha:=\sum_{i=1}^d\omega(g_i)\alpha_i,
$$
is submultiplicative for $s \in [\frac{1}{p},1)$. Note that the collection of norms $\{\|\cdot\|_s: 0<s<1\}$ define the same topology as $\{\|\cdot\|_s': 0<s<1\}$. Indeed
$\|\lambda\|_{s^{\min \omega(g_i)}}' \leq \|\lambda\|_s\leq \|\lambda\|_{s^{\max \omega(g_i)}}'$.

In \cite{ST03} the $K$-Banach algebra $D_s(G,K)$ is defined as the completion of $D(G,K)$ with respect to the norm $\|\cdot\|_s$, where $s \in [\frac{1}{p},1)$ is now fixed. 
The elements of $D_s(G,K)$ have unique convergent expansions
$$
\lambda=\sum_{\alpha \in \N_0^d} d_\alpha {\bb}^\alpha, \y \y \y |d_\alpha|s^{\tau \alpha} \longrightarrow 0 \text{ as } |\alpha|\longrightarrow \infty.
$$
The Fr\'{e}chet-Stein structures in \cite[Sect.~4]{ST03} arise from the isomorphism $D(G,K) \overset{\sim}{\longrightarrow} \varprojlim_{s\in [\frac{1}{p},1)} D_s(G,K)$. 

\begin{lem}
Any distribution $\lambda \in D^\dagger(G,K)$ has a unique expansion as a weakly convergent sum $\lambda=\sum_{\alpha \in \N_0^d} d_\alpha {\bb}^\alpha$ with coefficients 
$d_\alpha \in K$ satisfying:
$$
\|\lambda\|_s^\dagger:={\sup}_{\alpha \in \N_0^d} |\alpha!d_\alpha|s^{|\alpha|}<\infty, \y \y \y \forall s\in (0,1).
$$
\end{lem}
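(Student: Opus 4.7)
The plan is to reduce to Lemma~\ref{distpoly} applied at each strict neighborhood level, exploiting the presentation $D^\dagger(G,K) \iso \varprojlim_N (B_N)_b'$ furnished by Theorem~\ref{frechet}. Under the global chart $\psi$ and Definition~\ref{ocfun}, $\CC^\dagger(G,K)$ is the locally convex inductive limit of the rigid function spaces on the polydiscs $\Bbb{B}(p^{\tau_{N,1}},\ldots,p^{\tau_{N,d}})$, so any $\lambda\in D^\dagger(G,K)$ restricts to a compatible family $\lambda_N \in (B_N)_b'$.

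First I would apply Lemma~\ref{distpoly} with polyradius $(p^{\tau_{N,1}},\ldots,p^{\tau_{N,d}})$ to obtain, for each $N$, a unique expansion $\lambda_N = \sum_\alpha d^{(N)}_\alpha \mm_\alpha$ satisfying $\|\lambda_N\|={\sup}_\alpha |\alpha!\, d^{(N)}_\alpha|\, p^{-\sum_i \tau_{N,i}\alpha_i}<\infty$. Uniqueness, together with compatibility of the $\lambda_N$ under the transition maps, forces $d^{(N)}_\alpha$ to be independent of $N$; denote the common value by $d_\alpha$. The remark quoted just before the lemma identifies $\bb^\alpha$ with $\mm_\alpha$ as elements of $D^\dagger(G,K)$ (both pick off the $\alpha$-th Mahler coefficient of $f$), yielding the expansion $\lambda = \sum_\alpha d_\alpha \bb^\alpha$. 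Weak convergence is read off level by level: for $f \in \CC^\dagger(G,K)$, $f$ extends to some $\Bbb{G}_M$, and weak convergence of $\sum_\alpha d_\alpha \mm_\alpha$ to $\lambda_M$ in $(B_M)_b'$ gives $\sum_\alpha d_\alpha m_\alpha = \lambda_M(f) = \lambda(f)$. Uniqueness of the $d_\alpha$ then follows by testing against the overconvergent polynomials $\binom{x}{\beta}$, on which $\bb^\alpha$ acts as $\delta_{\alpha\beta}$.

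The crucial remaining step is to convert these $N$-indexed bounds into the $s$-indexed bounds of the statement. Given $s\in(0,1)$, write $s=p^{-\sigma}$ with $\sigma>0$. Because $\tau_{N,i}\to 0^+$ as $N\to\infty$, I can choose $N$ so large that $\max_i \tau_{N,i} \leq \sigma$. Then for every $\alpha\in\N_0^d$,
$$|\alpha!\, d_\alpha|\, s^{|\alpha|}=|\alpha!\, d_\alpha|\, p^{-\sigma|\alpha|}\leq \|\lambda_N\|\cdot p^{\sum_i(\tau_{N,i}-\sigma)\alpha_i}\leq \|\lambda_N\|,$$
so $\|\lambda\|_s^\dagger\leq \|\lambda_N\|<\infty$. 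The only mild obstacle is the clean identification $\bb^\alpha=\mm_\alpha$ in $D^\dagger(G,K)$ and the compatibility of the decompositions across the inverse system, both of which are handled by the quoted remark together with the uniqueness clause of Lemma~\ref{distpoly}.
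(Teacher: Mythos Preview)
Your argument is correct, but it takes a longer route than the paper does. The paper's proof is a one-liner: it simply invokes Lemma~\ref{distpoly} with the \emph{equal} polyradius $r_1=\cdots=r_d=\tfrac{1}{s}$, using the standard presentation $\CC^\dagger(G,K)\simeq W_d=\varinjlim_{r>1} T_{d,r}$ and its dual $D^\dagger(G,K)\simeq\varprojlim_{r>1}(T_{d,r})_b'$. With this choice the quantity $r^{-|\alpha|}$ in Lemma~\ref{distpoly} is exactly $s^{|\alpha|}$, so the bound $\sup_\alpha|\alpha!\,d_\alpha|s^{|\alpha|}<\infty$ falls out immediately, for every $s\in(0,1)$.

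You instead go through the presentation $D^\dagger(G,K)\simeq\varprojlim_N(B_N)_b'$ of Theorem~\ref{frechet}, which involves the \emph{unequal} polyradii $(p^{\tau_{N,1}},\ldots,p^{\tau_{N,d}})$, and then convert the resulting $N$-indexed bounds into $s$-indexed ones via the estimate $\max_i\tau_{N,i}\le\sigma$. This works fine and has the small virtue of tying the lemma explicitly to the Fr\'{e}chet algebra structure already in hand; the cost is the extra conversion step, which the paper avoids entirely by choosing equal radii from the outset.
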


\begin{proof}
This follows from Lemma \ref{distpoly} with $r_1=\cdots=r_d=\frac{1}{s}$. 
\end{proof}

Recall that $\omega(g)>\frac{1}{p-1}$ for all $g \in G$, and $\theta<1$. Therefore $\frac{1}{p}<\theta^{1/\min \omega(g_i)}$. The next result at least gives a continuous algebra homomorphism
$$
D^\dagger(G,K) \longrightarrow {\varprojlim}_{s\in [\frac{1}{p},\theta^{1/\min \omega(g_i)})} D_s(G,K).
$$
Under favorable circumstances we will show this is an isomorphism, and then deduce from \cite{ST03} that $D^\dagger(G,K)$ is Fr\'{e}chet-Stein.

\begin{lem}\label{contact}
We have a continuous inclusion of algebras $D^\dagger(G,K) \hookrightarrow D_s(G,K)$ for $\frac{1}{p}\leq s<\theta^{1/\min \omega(g_i)}$.
\end{lem}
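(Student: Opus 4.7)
The plan is to exploit the unique expansion $\lambda = \sum_{\alpha} d_\alpha \bb^\alpha$ of any $\lambda \in D^\dagger(G,K)$ guaranteed by the preceding lemma, and show that the same series represents an element of $D_s(G,K)$. The heart of the argument is a norm inequality of the shape $\|\lambda\|_s \leq \|\lambda\|_t^\dagger$ for a suitable $t = t(s) \in (0,1)$, which simultaneously gives (i) $|d_\alpha|s^{\tau\alpha} \to 0$ (so the series indeed represents an element of $D_s(G,K)$) and (ii) continuity of the induced map.

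The key estimate rests on the elementary bound $|\alpha!| \geq \theta^{|\alpha|}$, which follows from $v(\alpha!) \leq |\alpha|/(p-1)$ and $\theta = p^{-1/(p-1)}$. Writing $\omega_0 := \min_i \omega(g_i)$ and using $\tau\alpha \geq \omega_0 |\alpha|$ together with $0 < s < 1$, I would derive, for any $t \in (0,1)$, the chain
\[
|d_\alpha|\,s^{\tau\alpha} \;\leq\; |d_\alpha|\,s^{\omega_0|\alpha|} \;\leq\; \frac{\|\lambda\|_t^\dagger}{|\alpha!|\,t^{|\alpha|}}\,s^{\omega_0|\alpha|} \;\leq\; \|\lambda\|_t^\dagger \bigg(\frac{s^{\omega_0}}{\theta t}\bigg)^{|\alpha|}.
\]
The hypothesis $s < \theta^{1/\omega_0}$ is exactly what makes $s^{\omega_0}/\theta < 1$; taking $t = s^{\omega_0}/\theta$ (which lies in $(0,1)$ since $s \geq 1/p > 0$) delivers the comparison $\|\lambda\|_s \leq \|\lambda\|_t^\dagger$, while choosing $t$ strictly larger than this threshold forces the ratio $< 1$, proving $|d_\alpha|s^{\tau\alpha} \to 0$.

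To promote the resulting continuous inclusion to a $K$-algebra homomorphism, I would observe that both algebras contain the subring generated by finitely supported expansions $\sum_{|\alpha| \leq N} d_\alpha \bb^\alpha$ -- essentially sitting inside the group algebra $K[G]$, since each $\bb^\alpha$ lies in $\Z[G]$ -- and that the convolution products on $D^\dagger(G,K)$ and $D_s(G,K)$ agree there because both extend the group law of $G$. By density on each side and continuity, the natural map is automatically multiplicative, sending $\bb^\alpha$ to $\bb^\alpha$.

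The main point requiring a moment of care, rather than any real difficulty, is to check that $\|\cdot\|_t^\dagger$ is actually a continuous seminorm for the Fr\'{e}chet topology on $D^\dagger(G,K)$ arising from $(W_d)_b' \cong \varprojlim_N (B_N)_b'$ in Theorem \ref{frechet}. But Lemma \ref{distpoly} identifies the dual norm on $(B_N)_b'$ as $\sup_\alpha |\alpha!\,d_\alpha|\,p^{-\sum_i \tau_{N,i}\alpha_i}$, and since the polyradii $\tau_{N,i}$ decrease to $0$ the two families $\{\|\cdot\|_{p^{-t}}^\dagger\}_{t > 0}$ and $\{\|\cdot\|_N\}_{N \geq 1}$ sandwich each other and hence generate the same topology; in particular $\|\cdot\|_{s^{\omega_0}/\theta}^\dagger$ is continuous on $D^\dagger(G,K)$.
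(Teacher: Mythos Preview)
Your argument is correct and follows the same route as the paper: the paper's proof is exactly your chain of inequalities specialized (implicitly) to the bound $|d_\alpha|s^{\tau\alpha}\leq |\alpha!d_\alpha|(s^{\omega_0}\theta^{-1})^{|\alpha|}$, and the convergence to zero then comes from knowing $\|\lambda\|_t^\dagger<\infty$ for all $t<1$, which is precisely your trick of choosing $t>s^{\omega_0}/\theta$. Your added justifications---that the $\|\cdot\|_t^\dagger$ generate the Fr\'{e}chet topology via comparison with the $(B_N)_b'$-norms, and that multiplicativity follows from density of $K[G]$---are valid elaborations the paper leaves implicit.
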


\begin{proof}
Let $\sum_{\alpha \in \N_0^d} d_\alpha {\bb}^\alpha$ be a weakly convergent sum representing an element of $D^\dagger(G,K)$. Then for all tuples $\alpha$ we have the estimate
\begin{align*}
|d_{\alpha}|s^{\tau \alpha}&=|\alpha! d_{\alpha}|s^{\tau \alpha}\frac{1}{|\alpha!|} \\
& \leq |\alpha! d_{\alpha}|s^{\min \omega(g_i)|\alpha|}p^{|\alpha|/(p-1)} \\
&=|\alpha! d_{\alpha}| (s^{\min \omega(g_i)}\theta^{-1})^{|\alpha|}\\
\end{align*}
which shows that $|d_{\alpha}|s^{\tau \alpha} \longrightarrow 0$ as $|\alpha|\longrightarrow \infty$ provided $s^{\min \omega(g_i)}\theta^{-1}<1$. 
\end{proof}

\subsection{Comparison with the Fr\'{e}chet algebra structure}

We compare the inverse system $(B_N)_b'$ to the system $D_s(G,K)$ from \cite{ST03}. Recall that $D^\dagger(G,K) \overset{\sim}{\longrightarrow} \varprojlim_N (B_N)_b'$ by 
(the proof of) Theorem \ref{frechet}.

\begin{lem}
The following holds.
\begin{itemize}
\item[(1)] Let $\frac{1}{p}\leq s<\theta^{1/\min \omega(g_i)}$. Then for all $N>\!\!>0$ there is a continuous linear map
\begin{align*}
(B_N)_b' & \longrightarrow D_s(G,K) \\
{\mm}_\alpha & \longmapsto {\bb}^\alpha.
\end{align*}
\item[(2)] Let $N \geq 1$. Then for all $s \geq \frac{1}{p}$ sufficiently close to $\theta^{1/\max \omega(g_i)}$ there is a continuous linear map
\begin{align*}
D_s(G,K) & \longrightarrow (B_N)_b' \\
{\bb}^\alpha & \longmapsto {\mm}_\alpha.
\end{align*}
\end{itemize}
\end{lem}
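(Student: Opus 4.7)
The plan is to reduce both parts to direct norm estimates. Using Lemma \ref{distpoly}, every $\delta \in (B_N)_b'$ is uniquely expanded as $\sum_\alpha d_\alpha\,{\mm}_\alpha$ with $|\alpha!\,d_\alpha| \leq \|\delta\|\,p^{\sum_i \tau_{N,i}\alpha_i}$, while the Schneider--Teitelbaum description gives $\delta = \sum_\alpha d_\alpha\,{\bb}^\alpha$ for $\delta \in D_s(G,K)$ with $|d_\alpha|\,s^{\tau\alpha}\to 0$. Since ${\bb}^\alpha(f) = m_\alpha$ is exactly the $\alpha$-th Mahler coefficient (as noted in the text), the assignment ${\mm}_\alpha\leftrightarrow {\bb}^\alpha$ is the natural one on both sides, and the content of the lemma is the continuity of the resulting extension.

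For part (1) I would combine the bound on $|\alpha!\,d_\alpha|$ coming from Lemma \ref{distpoly} with the elementary inequality $|\alpha!|^{-1} \leq \theta^{-|\alpha|}$ (which is simply $v(\alpha!)\leq |\alpha|/(p-1)$) to estimate
\[
|d_\alpha|\,s^{\tau\alpha} \;\leq\; \|\delta\|\,\prod_{i=1}^d \bigl( s^{\omega(g_i)}\,\theta^{-1}\,p^{\tau_{N,i}} \bigr)^{\alpha_i}.
\]
Since $\tau_{N,i}\to 0^+$ as $N\to\infty$ and the hypothesis $s < \theta^{1/\min_i\omega(g_i)}$ is equivalent to $s^{\omega(g_i)} < \theta$ for every $i$, each parenthesised factor tends to a limit strictly less than $1$. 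For $N$ sufficiently large (depending on $s$), all the factors are themselves $<1$, and this simultaneously gives $|d_\alpha|\,s^{\tau\alpha}\to 0$ and the operator bound $\|\cdot\|_s \leq \|\cdot\|$, yielding the desired continuous map.

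For part (2) I would run the analogous estimate in reverse, and the main obstacle appears here: the trivial bound $|\alpha!|\leq 1$ is too crude and would only yield the existence of the map for $s$ in a range bounded away from $\theta^{1/\max_i\omega(g_i)}$, which would not be sufficient for Theorem \ref{maintwo}. The remedy is the near-optimal inequality $|\alpha!|\leq C(\alpha)\,\theta^{|\alpha|}$ with $C(\alpha)$ subexponential in $|\alpha|$, which comes from $v(\alpha!) = \sum_i(\alpha_i-s_p(\alpha_i))/(p-1)$ together with $s_p(\alpha_i)=O(\log\alpha_i)$. Combined with $|d_\alpha|\leq \|\delta\|_s\,s^{-\tau\alpha}$ this gives
\[
|\alpha!\,d_\alpha|\,p^{-\sum_i \tau_{N,i}\alpha_i} \;\leq\; C(\alpha)\,\|\delta\|_s\,\prod_{i=1}^d \bigl( \theta\,s^{-\omega(g_i)}\,p^{-\tau_{N,i}} \bigr)^{\alpha_i},
\]
and the supremum is finite once each factor is strictly less than $1$, i.e.\ $s > \theta^{1/\omega(g_i)}\,p^{-\tau_{N,i}/\omega(g_i)}$ for all $i$. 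These critical values are all strictly below $\theta^{1/\max_j\omega(g_j)}$, so the admissible range of $s$ is a nonempty interval lying just below $\theta^{1/\max_i\omega(g_i)}$, which is what ``sufficiently close'' means in the statement.

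The asymmetry between the $\min$ in part (1) and the $\max$ in part (2) reflects which side of the asymptotic $|\alpha!|\asymp \theta^{|\alpha|}$ is used (lower bound for (1), upper bound for (2)); apart from this, both arguments are formal consequences of the two expansions described above.
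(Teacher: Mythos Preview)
Your proof is correct and follows essentially the same route as the paper's: both parts are reduced to direct coefficient-wise estimates using Lemma \ref{distpoly} for $(B_N)_b'$ and the expansion $\lambda=\sum_\alpha d_\alpha\,{\bb}^\alpha$ for $D_s(G,K)$, combined with the two sides of the asymptotic $v(\alpha!)=\frac{|\alpha|}{p-1}+O(\log|\alpha|)$. The only cosmetic difference is that you keep the individual weights $\omega(g_i)$ in the product $\prod_i(\cdots)^{\alpha_i}$, whereas the paper immediately bounds $s^{\tau\alpha}$ via $s^{\min_j\omega(g_j)\cdot|\alpha|}$ (respectively $s^{\max_j\omega(g_j)\cdot|\alpha|}$); your version is marginally sharper but leads to the same conclusion.
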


\begin{proof}
The proof of part (1) is very similar to that of Lemma \ref{contact}. Fix an $s$ in the given range. We must show $|d_{\alpha}|s^{\tau \alpha}\rightarrow 0$ as $|\alpha|\rightarrow 0$, provided $\sum_{\alpha \in \N_0^d} d_\alpha {\mm}_\alpha$ lies in $(B_N)_b'$ where $N$ is large enough. So, assume $A:={\sup}_{\alpha \in \N_0^d}|\alpha!d_\alpha|r^{-|\alpha|}<\infty$ where $r_i=p^{\tau_{N,i}}$. Then
\begin{align*}
|d_{\alpha}|s^{\tau \alpha} &=|\alpha! d_{\alpha}|r^{-|\alpha|}r^{|\alpha|}s^{\tau \alpha}\frac{1}{|\alpha!|}  \\
&\leq A\cdot \prod_{j=1}^d (r_js^{\min\omega(g_i)}\theta^{-1})^{\alpha_j}.
\end{align*}
By assumption $s^{-\min\omega(g_i)}\theta>1$. Since $p^{\tau_{N,j}}\rightarrow 1^+$ as $N \rightarrow \infty$, we can arrange that $r_js^{\min\omega(g_i)}\theta^{-1}<1$ for all $j$ by choosing $N$ sufficiently large. This shows $|d_{\alpha}|s^{\tau \alpha}$ goes to zero -- plus the fact that the resulting map is continuous (in fact a contraction). 

For the proof of part (2) we now start with an $N\geq 1$ and seek conditions on $s$ which guarantee that $|\alpha!d_\alpha|r^{-|\alpha|}$ is bounded when 
${\lim}_{|\alpha|\rightarrow \infty} |d_{\alpha}|s^{\tau \alpha}=0$. Here we keep the polyradius $r_i=p^{\tau_{N,i}}$. 
\begin{align*}
|\alpha!d_\alpha|r^{-|\alpha|} &=|d_\alpha|s^{\tau\alpha}s^{-\tau\alpha}r^{-|\alpha|}|\alpha!| \\
&\leq B \cdot (\alpha_1\cdots \alpha_d)^C \cdot \prod_{j=1}^d (s^{-\max \omega(g_i)}r_j^{-1} \theta)^{\alpha_j} 
\end{align*}
where $B:={\sup}_{\alpha \in \N_0^d} |d_{\alpha}|s^{\tau \alpha}$, and $C>0$ is a large enough constant arising from the asymptotics $v(n!)=\frac{n}{p-1}+O(\log_p n)$.
Now the $r_j>1$ are fixed, and if we choose $s$ close enough to $\theta^{1/\max \omega(g_i)}$ we can arrange that
$s^{-\max \omega(g_i)}r_j^{-1} \theta<1$ for all $j$. This implies that $|\alpha!d_\alpha|r^{-|\alpha|}$ indeed stays bounded, and moreover that the map taking 
${\bb}^\alpha \mapsto {\mm}_\alpha$ is continuous (but not necessarily a contraction).
\end{proof}

Altogether, this gives the commutative diagram of continuous linear maps:
$$
\begin{tikzcd}
\varprojlim_{s<\theta^{1/\max \omega(g_i)}}D_s(G,K) \arrow[r]  \ar[rr, "\text{inclusion}", bend left=20] \arrow[dr] & \varprojlim_N (B_N)_b' \arrow[r] & \varprojlim_{s<\theta^{1/\min \omega(g_i)}}D_s(G,K) \\
& D^\dagger(G,K) \arrow[u, "\simeq"] \arrow[ur]
\end{tikzcd}
$$
If $(G,\omega)$ is equi-$p$-valued, which means $\omega(g_i)=\omega_0$ is constant for all $i=1,\ldots,d$, then we deduce an isomorphism of topological algebras
\begin{equation}\label{keyiso}
D^\dagger(G,K) \overset{\sim}{\longrightarrow} {\varprojlim}_{s<\theta^{1/\omega_0}}D_s(G,K).
\end{equation}
The condition (HYP) in \cite[p.~163]{ST03} here amounts to the inequality $2\omega_0>\frac{p}{p-1}$ (and saturation). If this is satisfied \cite[Thm.~4.5, Thm.~4.9]{ST03} imply the following properties for any two $\frac{1}{p}<s'\leq s<1$ in $p^{\Q}$. 
\begin{itemize}
\item $D_s(G,K)$ is a (left and right) \underline{Noetherian} integral domain;
\item The transition map $D_s(G,K)\longrightarrow D_{s'}(G,K)$ is \underline{flat}.
\end{itemize}
In particular, having established the isomorphism (\ref{keyiso}) the main results of \cite{ST03} immediately give the Fr\'{e}chet-Stein structure on $D^\dagger(G,K)$ -- under the assumption that $(G,\omega)$ is equi-$p$-valued and $2\omega_0>\frac{p}{p-1}$.

\begin{exmp}\label{uniform}
Suppose $G$ is a {\it{uniform}} pro-$p$-group. By \cite[Thm.~4.5]{DdSMS} this means $G$ is topologically finitely generated, torsion-free, and powerful -- which means 
$G/\overline{G^{p\varepsilon}}$ is abelian, where $\varepsilon:=\begin{cases} 1 & \text{if $p>2$} \\ 2 & \text{if $p=2$} \end{cases}$.
Let $P_i(G)$ be the lower $p$-series, as defined in \cite[Df.~1.15]{DdSMS}. One can easily check that
$$
\omega^{\text{can}}(g)=\sup\{i\geq 1: g \in P_i(G)\}+\begin{cases} 0 & \text{if $p>2$} \\ 1 & \text{if $p=2$} \end{cases}
$$
defines a $p$-valuation on $G$, and $(G,\omega^{\text{can}})$ is a saturated group which admits a basis $(g_1,\ldots,g_d)$ with 
$\omega^\text{can}(g_i)=\varepsilon$ for all $i$. For instance, these claims are checked in \cite[p.~249]{HKN11}. In particular $(G,\omega^{\text{can}})$
satisfies the condition (HYP) from \cite{ST03} since $2\varepsilon>\frac{p}{p-1}$ for all primes $p \geq 2$. 
\end{exmp}

\begin{rem}
In \cite[Sect.~5]{ST03} the authors proceed to remove the hypothesis (HYP) and show that the locally analytic distribution algebra $D(G,K)$ is Fr\'{e}chet-Stein 
for {\it{any}} compact $p$-adic Lie group $G$. See \cite[Thm.~5.1]{ST03}. To do so, they select an open normal subgroup $U \vartriangleleft G$ which is uniform, along with 
coset representatives $y_1,\ldots,y_{\ell}$. The Dirac distributions $\delta_{y_1},\ldots, \delta_{y_{\ell}}$ then form a basis for $D(G,K)$ as a $D(U,K)$-module, and this is used to define 
a Fr\'{e}chet-Stein on $D(G,K)$ in terms of that on $D(U,K)$.

This approach does not work in the overconvergent setting. The coset decomposition of the space of locally analytic functions $\CC^\text{an}(G,K)=\bigoplus_{j=1}^\ell \CC^\text{an}(y_jU,K)$ does not have an analogue for $\CC^{\dagger}(G,K)$. The overconvergence of a function $f:G \longrightarrow K$ cannot be detected locally on the cosets of $U$.
\end{rem}


\section{Overconvergent representations}\label{overc}

\subsection{The space of $V$-valued overconvergent functions}

Let $V$ be a locally convex $K$-vector space of compact type. In this section we will define a space $\CC^\dagger(G,V)$ of locally analytic functions $f:G \longrightarrow V$ which is canonically topologically isomorphic to the completed tensor product $\CC^\dagger(G,K) \widehat{\otimes}_K V$. (Since $\CC^\dagger(G,K)$ is also of compact type, the inductive and projective tensor product topologies coincide. We adopt the convention in \cite[Prop.~1.1.32]{Em17} and omit the corresponding subscripts $\iota$ and $\pi$.) 

\begin{lem}\label{compact}
Suppose $V$ is a locally convex $K$-vector space of compact type. Then the following holds.
\begin{itemize}
\item[(1)] The inclusion $\CC^\dagger(G,K) \hookrightarrow \CC^\text{an}(G,K)$ is continuous; 
\item[(2)] The induced map $\CC^\dagger(G,K)\widehat{\otimes}_K V \longrightarrow \CC^\text{an}(G,K)\widehat{\otimes}_K V$ is a continuous injection. 
\end{itemize}
\end{lem}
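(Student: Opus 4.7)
Since $\CC^\dagger(G,K)$ carries the direct limit topology inherited from the identification with $\varinjlim_{r \in \Gamma_{>1}} T_{d,r}$, the universal property of locally convex final topologies reduces continuity to the question of whether each composite $T_{d,r} \hookrightarrow \CC^\dagger(G,K) \to \CC^\text{an}(G,K)$ is continuous. Fix $r > 1$ in $\Gamma$. The inclusion $T_{d,r} \hookrightarrow T_d$ into the Tate algebra is a norm contraction, and $T_d$ identifies (via the chart $\psi$) with the Banach space of globally analytic functions on $G \simeq \Z_p^d$; this Banach space sits continuously in $\CC^\text{an}(G,K)$ as the first stage of the strict inductive system presenting the space of locally analytic functions (cf.\ \cite{ST03}). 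Composing yields the required continuous map.

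\textbf{Strategy for (2).} Continuity of the induced map is automatic by functoriality of $\widehat{\otimes}_K$ applied to the continuous inclusion from part (1) and $\mathrm{id}_V$, so the content is injectivity. The plan is a reduction to the Banach level. Write $V = \varinjlim_n V_n$ as a compact type inductive limit of Banach spaces with compact injective transitions. Since $\widehat{\otimes}_K$ commutes with such strict inductive limits in the compact type category (cf.\ \cite[Prop.~1.1.32]{Em17}), it suffices to establish injectivity of
$$
\CC^\dagger(G,K) \widehat{\otimes}_K V_n \longrightarrow \CC^\text{an}(G,K) \widehat{\otimes}_K V_n
$$
for each Banach $V_n$. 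Applying the same commutation to the left-hand factors and using the standard presentation of $\CC^\text{an}(G,K)$ as the inductive limit of Banach spaces $C_h$ of functions analytic on cosets of a descending subgroup filtration, the problem reduces to injectivity of $T_{d,r} \widehat{\otimes}_K V_n \to C_h \widehat{\otimes}_K V_n$ for appropriate pairs $(r,h)$.

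\textbf{Main obstacle and resolution.} At the Banach level the source identifies concretely with the space of $V_n$-valued power series $\sum_\alpha v_\alpha Z^\alpha$ satisfying $\|v_\alpha\|\, r^{|\alpha|} \to 0$, and the map sends such a series to its associated $V_n$-valued function. Injectivity is then elementary: if the function vanishes on $\Z_p^d$, pairing with any $\mu \in V_n'$ gives a scalar power series vanishing on $\Z_p^d$, whence $\mu(v_\alpha) = 0$ for all $\alpha$, and Hahn--Banach forces $v_\alpha = 0$. The step requiring care is the passage back up through the inductive limits: one must verify that injectivity is preserved by $\varinjlim$, which holds because the transition maps in both inductive systems are injective, so any element of the limit mapping to zero must already vanish at its Banach stage of origin. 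The main obstacle is therefore not any single deep step but the accurate coordination of the tensor-product and inductive-limit compatibilities.
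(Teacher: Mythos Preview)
Your argument for part~(1) is essentially identical to the paper's: both reduce to the continuity of each $T_{d,r} \hookrightarrow T_d \hookrightarrow \CC^{\text{an}}(G,K)$ via the universal property of the direct limit topology.

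For part~(2) your approach is correct but genuinely different from the paper's. The paper dispatches injectivity in one line by invoking \cite[Prop.~1.1.26]{Em17}, a general black-box result which guarantees that a continuous injection $E \hookrightarrow F$ remains injective after applying $-\,\widehat{\otimes}_K V$, provided the spaces involved are bornological and complete (as compact type spaces are). You instead unwind this by hand: decompose both sides along the inductive limits $V = \varinjlim V_n$, $\CC^\dagger(G,K) = \varinjlim T_{d,r}$, $\CC^{\text{an}}(G,K) = \varinjlim C_h$ using \cite[Prop.~1.1.32]{Em17}, reduce to the Banach level, and then argue injectivity concretely via the power-series description and Hahn--Banach. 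Your route is more self-contained and makes the mechanism transparent, at the cost of some bookkeeping with the inductive systems; the paper's route is more economical but hides the content in the cited proposition. Either is a valid proof.
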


\begin{proof}
After choosing a basis for $(G,\omega)$ we have to check the continuity of the inclusion $T_{d,r}\hookrightarrow \CC^\text{an}(G,K)$ for all $r \in \Gamma_{>1}$. The latter factors 
as $T_{d,r}\hookrightarrow T_{d}$ composed with $T_{d}\hookrightarrow \CC^\text{an}(G,K)$. Both of these maps are continuous. (In the terminology of \cite[p.~447]{ST02} take a 
$V$-index consisting of a single global chart $G \overset{\sim}{\longrightarrow} \Z_p^d$.)

In part (2) continuity is clear. The fact that the map is an injection follows from \cite[Prop.~1.1.26]{Em17}, which applies since compact type spaces are bornological and complete (\cite[Thm.~1.1]{ST02}).
\end{proof}

The dual of the inclusion in part (1) is a continuous algebra homomorphism $D(G,K) \longrightarrow D^\dagger(G,K)$ which is {\it{injective}}. (Indeed, if $\lambda \in D(G,K)$ 
vanishes on all polynomial functions, such as $x \mapsto \binom{x}{\alpha}$, then clearly 
$\lambda=0$.)

By \cite[Prop.~2.1.28]{Em17} there is a topological isomorphism
\begin{equation}\label{tensoran}
\CC^\text{an}(G,V) \overset{\sim}{\longrightarrow} \CC^\text{an}(G,K) \widehat{\otimes}_K V.
\end{equation}
We define $\CC^\dagger(G,V)$ to be the subspace of $\CC^\text{an}(G,V)$ which corresponds to $\CC^\dagger(G,K)\widehat{\otimes}_K V$ under (\ref{tensoran}).

\begin{defn}
$\CC^\dagger(G,V)$ is the space of functions $f:G \longrightarrow V$ arising from $\CC^\dagger(G,K)\widehat{\otimes}_K V$, where the latter is viewed as a subset of 
$\CC^\text{an}(G,V)$ via the inverse of (\ref{tensoran}) and the injection in part (2) of Lemma \ref{compact}. We endow $\CC^\dagger(G,V)$ with a topology by transferring the topology from $\CC^\dagger(G,K)\widehat{\otimes}_K V$.
\end{defn}

We thus have a commutative diagram
$$
\begin{tikzcd}
\CC^\dagger(G,V) \arrow[r, "\sim"]   \arrow[d] & \CC^\dagger(G,K)\widehat{\otimes}_K V \arrow[d]  \\
\CC^\text{an}(G,V)  \arrow[r, "\sim"] & \CC^\text{an}(G,K) \widehat{\otimes}_K V
\end{tikzcd}
$$
where the horizontal maps are topological isomorphisms, and the vertical maps are continuous injections. 

\subsection{The notion of an overconvergent representation}

Now we assume $V$ is a locally analytic $G$-representation of compact type. Let $\rho: V \longrightarrow \CC^\text{an}(G,V)$ be the orbit map $v \mapsto \rho_v$, where $\rho_v(g)=gv$. This is a {\it{continuous}} injection by \cite[Thm.~3.6.12]{Em17} (and the paragraph right below its proof). Obviously $\rho$ is $G$-equivariant if we let $G$ act by
right translations on $\CC^\text{an}(G,V)$.

\begin{defn}
We say $V$ is overconvergent if there is a continuous map $V \longrightarrow \CC^\dagger(G,V)$ such that the diagram
$$
\begin{tikzcd}
 & \CC^\dagger(G,V) \arrow[d]  \\
V  \arrow[r, "\rho"] \arrow[ru, dashed] & \CC^\text{an}(G,V)
\end{tikzcd}
$$
commutes. (The dashed arrow is necessarily unique if it exists.)
\end{defn}

When $V$ is overconvergent we will equip $V_b'$ with a $D^\dagger(G,K)$-module structure. We start by reviewing how $V_b'$ becomes a $D(G,K)$-module. Composing the isomorphism (\ref{tensoran}) with the orbit map yields
$$
c: V \overset{\rho}{\longrightarrow} \CC^\text{an}(G,V) \overset{\sim}{\longrightarrow} \CC^\text{an}(G,K) \widehat{\otimes}_K V.
$$
Passing to strong duals gives a continuous map 
\begin{align*}
D(G,K)\widehat{\otimes}_K V_b'  & \overset{\sim}{\longrightarrow} (\CC^\text{an}(G,K) \widehat{\otimes}_K V)_b'  \overset{c'}{\longrightarrow} V_b' \\
\lambda \otimes v' & \longmapsto( \lambda \ast v')(v)=\lambda\big(g \mapsto \la gv,v' \ra \big).
\end{align*}
(The isomorphism is discussed in part (ii) of \cite[Prop.~1.1.32]{Em17}.) As a special case, temporarily taking $V=C^\text{an}(G,K)$ gives a jointly continuous multiplication map 
$\ast$ on $D(G,K)$ which is the {\it{opposite}} of the usual convolution product. For instance $\delta_x \ast \delta_y=\delta_{yx}$, as is easily checked. Since $D(G,K)$ is isomorphic to its opposite algebra via $g \mapsto g^{-1}$ this subtle difference will not play a role. 

A standard and straightforward calculation shows that $V_b'$ thus becomes a left module over $D(G,K)$, and 
the action $D(G,K)\times V_b' \longrightarrow V_b'$ is jointly continuous (by definition of the projective tensor product topology). 

When $V$ is an overconvergent representation we have a commutative diagram of continuous maps
$$
\begin{tikzcd}
& \CC^\dagger(G,V) \arrow[r, "\sim"]   \arrow[d] & \CC^\dagger(G,K)\widehat{\otimes}_K V \arrow[d]  \\
V  \arrow[r, "\rho"] \ar[rr, "c", bend right=20] \arrow[ru, dashed] & \CC^\text{an}(G,V)  \arrow[r, "\sim"] & \CC^\text{an}(G,K) \widehat{\otimes}_K V.
\end{tikzcd}
$$
The dual diagram implies the $D(G,K)$-action on $V_b'$ factors through $D^\dagger(G,K)$. This argument is reversible. The dual of a $D^\dagger(G,K)$-module is an overconvergent 
representation. As in \cite[Cor.~3.4]{ST02} the strong duality functor $V \mapsto V_b'$ restricts to an anti-equivalence of categories 
$$
\begin{tikzcd}
\{\text{overconvergent $G$-representations of compact type}\} \ar[d] \\
\{\text{continuous $D^\dagger(G,K)$-modules on nuclear Fr\'{e}chet spaces}\}.
\end{tikzcd}
$$

\subsection{Examples of overconvergent representations}

Our first example is $\CC^\dagger(G,K)$ on which $G$ acts by right translations. (By a small modification of the proof of \cite[Lem.~4.6]{LS23} this is a {\it{topological}} $G$-action in the sense of \cite[Sect.~0.3.11]{Em17}, which means each $g \in G$ acts via a topological automorphism of $\CC^\dagger(G,K)$.)

We have already noted that $\CC^\dagger(G,K)$ is of compact type (since $W_d$ with the fringe topology is of compact type). Moreover, the orbit map may be viewed as a map into 
$\CC^\dagger(G\times G,K)$ as follows:
\begin{align*}
\widetilde{\rho}: \CC^\dagger(G,K) & \longrightarrow \CC^\dagger(G,\CC^\dagger(G,K))  \overset{\sim}{\longrightarrow} \CC^\dagger(G,K) \widehat{\otimes}_K \CC^\dagger(G,K)
\underset{\Psi}{\overset{\sim}{\longrightarrow}} \CC^\dagger(G\times G,K) \\
f & \longmapsto \big[(x,y) \mapsto f(yx)\big].
\end{align*}
(See Remark \ref{tens} for the relation between $\otimes_K^\dagger$ and $\widehat{\otimes}_K$. The isomorphism $\Psi$ was defined in Section \ref{double}.) This map $\widetilde{\rho}$ is clearly a homomorphism of dagger algebras, and it is therefore automatically continuous with respect to the fringe topology by \cite[Lem.~1.8]{GK00}.
This shows that indeed $\CC^\dagger(G,K)$ is an overconvergent representation. 

More generally, let $W\subset \CC^\dagger(G,K)$ be a closed $G$-invariant subspace (with the induced topology). Note that $W$ is of compact type by part (i) of \cite[Prop.~1.2]{ST02}. Then $W$ is an overconvergent representation by the following result.

\begin{lem}
Let $V$ be an overconvergent representation. Then any closed $G$-invariant subspace $W \subset V$ is an overconvergent representation (relative to the induced topology). 
\end{lem}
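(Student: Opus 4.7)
The plan is to restrict the hypothesized lifted orbit map $\widetilde{\rho}_V\colon V \to \CC^\dagger(G,V)$ to $W$ and verify that its image automatically lands in the closed subspace $\CC^\dagger(G,W) \subset \CC^\dagger(G,V)$. First I would observe that because $W$ is closed and $G$-invariant, the orbit map $\rho_V\colon V \to \CC^{\an}(G,V)$ restricts to $\rho_W\colon W \to \CC^{\an}(G,W)$; under the topological isomorphism $\CC^{\an}(G,-) \simeq \CC^{\an}(G,K)\widehat{\otimes}_K (-)$ this restriction corresponds to the inclusion of tensor products induced by $W \hookrightarrow V$. Restricting the assumed dagger lift produces a continuous map $\widetilde{\rho}_V|_W\colon W \to \CC^\dagger(G,V)$.

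The crux will be the intersection identity
\[
\CC^\dagger(G,W) = \CC^\dagger(G,V) \cap \CC^{\an}(G,W) \quad \text{inside } \CC^{\an}(G,V),
\]
or equivalently the pullback description of $\CC^\dagger(G,K)\widehat{\otimes}_K W$ as a subspace of $\CC^\dagger(G,K)\widehat{\otimes}_K V$. To obtain this I would use that $W$ is a closed subspace of the compact type space $V$, so the quotient sequence $0 \to W \to V \to V/W \to 0$ is strict exact with $V/W$ again of compact type. Tensoring with the compact type spaces $\CC^\dagger(G,K)$ and $\CC^{\an}(G,K)$ preserves strict exactness (cf.\ \cite[Prop.~1.1.26, Prop.~1.1.32]{Em17} and the general exactness of $\widehat{\otimes}_K$ on strict exact sequences of nuclear Fr\'{e}chet spaces upon dualizing). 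This yields a commutative diagram of strict exact sequences from which the intersection identity follows by a direct diagram chase.

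With this identity in hand I would finish as follows: for $w \in W$, the image of $\widetilde{\rho}_V(w)$ under the projection $\CC^{\an}(G,V) \to \CC^{\an}(G,V/W)$ is the orbit map $g \mapsto gw + W$, which vanishes identically by $G$-invariance of $W$. Consequently $\widetilde{\rho}_V(w)$ lies in $\CC^\dagger(G,V) \cap \CC^{\an}(G,W) = \CC^\dagger(G,W)$, producing the desired lift $W \to \CC^\dagger(G,W)$. Continuity is automatic since $\CC^\dagger(G,W)$ carries the subspace topology inherited from $\CC^\dagger(G,V)$ (as a closed subspace of a compact type space). The hardest part will be the strict exactness of the tensor product sequences; once this ingredient from the theory of compact type spaces is installed, the rest reduces to a formal diagram chase together with the $G$-invariance of $W$.
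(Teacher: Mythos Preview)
Your argument is correct, but it takes a genuinely different route from the paper's. You work directly on the representation side: restrict the lifted orbit map $\widetilde{\rho}_V$ to $W$, and use the pullback identity $\CC^\dagger(G,W)=\CC^\dagger(G,V)\cap\CC^{\an}(G,W)$ to see that the image lands where it should. The paper instead passes to strong duals and works on the module side: the strict surjection $p:V_b'\twoheadrightarrow W_b'$ has $D^\dagger(G,K)$-stable kernel, so $W_b'$ inherits a $D^\dagger(G,K)$-module structure; joint continuity of the action is checked via separate continuity (both factors are Fr\'{e}chet), and one then dualizes back to obtain the orbit map $W\to\CC^\dagger(G,W)$.

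The trade-off is this. Your approach is conceptually direct and avoids the detour through duality, but its load-bearing step is the strict exactness of $\CC^\dagger(G,K)\widehat{\otimes}_K(-)$ on the sequence $0\to W\to V\to V/W\to 0$, which (as you correctly flag) requires some functional-analytic input beyond \cite[Prop.~1.1.26]{Em17}; one has to argue, for instance, that over a discretely valued $K$ the Banach building blocks split topologically, so that exactness survives the completed tensor product and the inductive limit. The paper's approach sidesteps this entirely: it never needs to know that $\CC^\dagger(G,K)\widehat{\otimes}_K W$ sits as a closed subspace of $\CC^\dagger(G,K)\widehat{\otimes}_K V$, and the only analytic ingredients are the strictness of $p$ (from \cite[Prop.~1.2]{ST02}) and the Fr\'{e}chet separate-to-joint continuity principle. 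So the paper's argument is more elementary and self-contained, while yours is more geometric and would generalize more readily to other situations where one has a good handle on exactness of the relevant tensor functors.
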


\begin{proof}
We construct the dual of the orbit map $W \longrightarrow \CC^\dagger(G,W)$. Passing to strong duals gives a {\it{strict}} surjection $p: V_b'\twoheadrightarrow W_b'$ by part (i) of 
\cite[Prop.~1.2]{ST02}, whose kernel $\{v' \in V': v'|_W=0\}$ is a $D^\dagger(G,K)$-submodule (since $W$ is $G$-invariant). This equips $W'$ with an abstract 
$D^\dagger(G,K)$-module structure for which $p$ is $D^\dagger(G,K)$-linear. What remains is verifying the action map $D^\dagger(G,K) \times W_b' \longrightarrow W_b'$ is jointly continuous. As $D^\dagger(G,K)$ and $W_b'$ are both Fr\'{e}chet spaces, it suffices to show {\it{separate}} continuity by \cite[Prop.~17.6]{Sch02}.

First, for a fixed $w' \in W'$ the map $D^\dagger(G,K) \rightarrow W_b'$ which takes $\lambda \mapsto \lambda \ast w'$ is clearly continuous (write $w'=p(v')$ and compose $p$ with 
$\lambda \mapsto \lambda \ast v'$, which is continuous since $V$ is assumed overconvergent). 

On the other hand, for a fixed $\lambda \in D^\dagger(G,K)$ the map $a_W: W_b'\rightarrow W_b'$ sending $w' \mapsto \lambda \ast w'$ is continuous by the following argument: 
For an open subset $U \subset W_b'$, in order to show $a_W^{-1}(U)$ is open it suffices to show $p^{-1}(a_W^{-1}(U))$ is open -- since $p$ is strict. This is now obvious from the commutative diagram:
$$
\begin{tikzcd}
W_b' \ar[r, "a_W"] & W_b' \\
V_b' \ar[r, "a_V"] \ar[u, "p"] & V_b' \ar[u, "p"].
\end{tikzcd}
$$

This essentially finishes the proof: By the completeness of $W_b'$, the action map extends uniquely to a continuous map  $D^\dagger(G,K) \widehat{\otimes}_K W_b' \longrightarrow W_b'$, the dual of which is the desired continuous orbit map.
\end{proof}

\bigskip

\noindent {\it{E-mail addresses}}: 

{\texttt{arlahiri@ucsd.edu, csorensen@ucsd.edu}}

\noindent {\sc{Department of Mathematics, UC San Diego, La Jolla, CA 92093-0112, USA.}}

{\texttt{mstrauch@indiana.edu}}

\noindent {\sc{Department of Mathematics, Indiana University, Bloomington, IN 47405-7106, USA.}}

\end{document}